\theoremstyle{plain}
\newtheorem{theorem}{Theorem}[section]
\newtheorem{introthm}{Theorem}
\newtheorem{lemma}[theorem]{Lemma}
\newtheorem{corollary}[theorem]{Corollary}
\newtheorem{proposition}[theorem]{Proposition}
\theoremstyle{definition}
\newtheorem{definition}[theorem]{Definition}
\theoremstyle{remark}
\newtheorem{remark}[theorem]{Remark}
\newtheorem{example}[theorem]{Example}
 \let\glb@currsize\relax
\NewDocumentCommand{\definealphabet}{mmmm}
 {%
  \int_step_inline:nnn { `#3 } { `#4 }
   {
    \cs_new_protected:cpx { #1 \char_generate:nn { ##1 }{ 11 } }
     {
      \exp_not:N #2 { \char_generate:nn { ##1 } { 11 } }
     }
   }
 }
\DeclareMathOperator{\Supp}{supp}
\let\opn\operatorname
\newcommand{\lto}{\longrightarrow}
\newcommand{\Lpha}{\bbL_p^{\opn{ha}}}
\newcommand{\Newt}[1]{\operatorname{\mathscr{N\kern-3pt e\kern-1pt w\kern-1pt t}}\left(#1\right)}
\Crefname{conjecture}{Conjecture}{Conjectures}
\Crefname{lemma}{Lemma}{Lemmas}
\Crefname{definition}{Definition}{Definitions}
\Crefname{remark}{Remark}{Remarks}
\Crefname{proposition}{Proposition}{Propositions}
\Crefname{corollary}{Corollary}{Corollaries}
\Crefname{algorithm}{Algorithm}{Algorithms}
\Crefname{example}{Example}{Examples}
\Crefname{proof}{Proof}{Proofs}
\newtheorem*{theorem*}{Theorem}
\newtheorem*{proposition*}{Proposition}
\numberwithin{equation}{section}
\numberwithin{figure}{section}
\newlist{propenum}{enumerate}{1}
\setlist[propenum]{label=(\arabic*), ref=\theproposition~(\arabic*)}
\newlist{rmkenum}{enumerate}{1}
\setlist[rmkenum]{label=(\arabic*), ref=\theproposition~(\arabic*)}
\newlist{lemenum}{enumerate}{1}
\setlist[lemenum]{label=(\arabic*), ref=\thelemma~(\arabic*)}
\newlist{thmenum}{enumerate}{1}
\setlist[thmenum]{label=(\arabic*), ref=\thetheorem~(\arabic*)}
\algnewcommand\algorithmicinput{\textbf{INPUT:}}
\algnewcommand\INPUT{\item[\algorithmicinput]}
\algnewcommand\algorithmicoutput{\textbf{OUTPUT:}}
\algnewcommand\OUTPUT{\item[\algorithmicoutput]}
\begin{document}
\title{Hyper-algebraic invariants of $p$-adic algebraic numbers}
\author{Shanwen Wang\orcidlink{0000-0003-0228-1208}}
\address{School of Mathematics, Renmin University of China, No. 59 Zhongguancun Street, Haidian District, Beijing, 100872, China}
\email{s\_wang@ruc.edu.cn}
\author{Yijun Yuan\orcidlink{0000-0001-6571-6980}}
\address{Institute for Theoretical Sciences, Westlake University, No. 600 Dunyu Road, Sandun town, Xihu district, Hangzhou, Zhejiang, 310030, China}
\email{941201yuan@gmail.com}
\subjclass[2020]{11S15, 11D88, 41A58, 16W60}
\keywords{hyper-algebraic elements, hyper-tame index, hyper-inertia index, $p$-adic Mal'cev-Neumann field, ramification}
\begin{abstract}
Let $p\geq 3$ be a prime. The hyper-algebraic elements in the $p$-adic Mal'cev-Neumann field $\bbL_p$ form an algebraically closed subfield $\Lpha$. In this article, we clarify the relations among the fields $\Lpha$, $\overline{\bbQ}_p$ and $\bbC_p$. We introduce two arithmetic invariants (hyper-tame index and hyper-inertia index) of hyper-algebraic elements and study the relation between these invariants and classical arithmetic invariants of $p$-adic algebraic numbers. Finally, we give a criterion for hyper-algebraic elements to be tamely ramified over $\bbQ_p$.
\end{abstract}
\maketitle

\section{Introduction}
Let $p\geq 3$ be a prime throughout this article. The $p$-adic Mal'cev-Neumann field $\bbL_p\coloneqq W(\overline{\bbF}_p)(\!(p^\bbQ)\!)$, constructed in \cite{Poonen1993}, is the unique minimal spherically complete extension of the field $\bbC_p$ of $p$-adic complex numbers. An element $f\in \bbL_p$ can be written uniquely in the form
\[f=\sum_{q\in \bbQ}[r_q]p^q, \text{ where }[\cdot]\colon\overline{\bbF}_p\lto W(\overline{\bbF}_p) \text{ is the Teichm\"{u}ller character}\]
and $\Supp(f)=\{q\in \bbQ\colon r_q\neq 0\}$ is a well-ordered subset of $(\bbQ,\leq)$. Thus, an element $f=\sum_{q\in \bbQ}[r_q]p^q$ of $\bbL_p$ is completely determined by its support $\Supp(f)$ and the set $\{r_q\}_{q\in\bbQ}$ of its coefficients.

The spherically complete condition is crucial in non-Archimedean functional analysis (see \cite[Proposition 9.2]{Schneider2002} for a concrete example). In arithmetic geometry, it also serves as an intermediate hypothesis in Scholze and Weinstein's classification of $p$-divisible groups over the ring $\mathcal{ O}_{\bbC_p}$ of integers of $\bbC_p$ (cf. \cite[Proposition 5.2.5]{Scholze2013}). Besides the importance of spherical completeness, it is surprising that not much arithmetic of $\bbL_p$ is investigated. We summarize several results from the literature:
\begin{enumerate}
	\item In \cite{lampertAlgebraicPadicExpansions1986}, Lampert introduced the notion of $p$-adic Mal'cev-Neumann series. In particular, he proved that the elements in $\bbL_p$, satisfying that the accumulating points of the support are all rational, form an algebraically closed field (cf. \cite[Theorem 2]{lampertAlgebraicPadicExpansions1986}). %
	\item In \cite{Poonen1993}, Poonen gave a rigorous construction of the field $\bbL_p$ and systematically studied various aspects of this field. In particular, a necessary condition for an element of $\bbL_p$ to be algebraic over $\bbQ_p$, which is claimed by Lampert in \cite[282]{lampertAlgebraicPadicExpansions1986}, is proved in \cite[Corollary 8]{Poonen1993}.
	\item Based on an idea of Lampert, Kedlaya proposed a transfinite Newton algorithm (cf. \cite[Proposition 1]{kedlayaPowerSeriesPAdic2001}) to prove the algebraic closeness of $\bbL_p$ effectively, which is extracted in \cite[Algorithm 1]{WangYuan2021}.
	\item In \cite[Theorem 13.4]{kedlayaAlgebraicityGeneralizedPower2017b}, Kedlaya gave a necessary and sufficient condition for an element in $\bbL_p$ to be a $p$-adic complex number, in terms of the so-called ``$p$-quasi-automatic elements''.
	\item The truncated expansions of roots of unity in $\bbL_p$ are studied in \cite[Theorem 3.3]{WangYuan2021} and \cite[Theorem 1.6]{OurArxiv}. Based on these results, the uniformizers of the $p$-adic false Tate curve extensions $\bbK_p^{m,n}\coloneqq \bbQ_p\left(\zeta_{p^m},p^{1/p^n}\right)$ for $(m,n)\in\left(\{2\}\times\bbZ_{\geq 1}\right)\cup \left(\bbZ_{\geq 3}\times\{1\}\right)$ are constructed (cf. \cite{WangYuan2021,wangUniformizerFalseTate2024}).
	\item On the field $\bbL_p$, we can define a canonical Frobenius map
	      by the formula
	      $$\varphi\colon \sum_{q\in\bbQ}[r_q]p^q\longmapsto \sum_{q\in\bbQ}[r_q^p]p^q.$$
	      In \cite{efimov2024hahnwittseriesgeneralizations}, Efimov proved that $\varphi$ acts on the systems of $p^n$-th roots of unity by taking inverse. Note that one can view the complex conjugation as the Frobenius automorphism of $\bbC$, and the result of Efimov justifies that the Frobenius $\varphi$ can be viewed as the complex conjugation on $\bbL_p$.

\end{enumerate}
The purpose of this article is to answer several natural questions concerning the arithmetic of the field $\bbL_p$, which we make precise in the following.%

\subsection{Criterion of algebraicity}

By \cite[282]{lampertAlgebraicPadicExpansions1986} and \cite[Corollary 8]{Poonen1993}, if $f\in\bbL_p$ is algebraic over $\bbQ_p$, then it satisfies the following conditions:
\begin{enumerate}
	\item there exists a positive integer $N$ such that $\opn{supp}(f)\subseteq \frac{1}{N}\bbZ[1/p]$;
	\item there exists a positive integer $k$ such that $r_q\in\bbF_{p^k}$ for all $q\in\opn{supp}(f)$.
\end{enumerate}
An element $f\in \bbL_p$ satisfying the above conditions is called \textit{hyper-algebraic}. The set $\Lpha$ of hyper-algebraic elements in $\bbL_p$ forms an algebraically closed field containing $\bbQ_p$. As a result, all $p$-adic algebraic numbers are hyper-algebraic, i.e. $\overline{\bbQ}_p\subseteq \Lpha$. Our first result is a clarification of relations among the fields $\Lpha$, $\overline{\bbQ}_p$ and $\bbC_p$:
\begin{introthm}[cf. \Cref{prop:54163}]
	The field $\Lpha$ is strictly larger than $\overline{\bbQ}_p$, and it is neither complete nor a subfield of $\bbC_p$.
\end{introthm}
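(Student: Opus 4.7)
The plan is to verify the three constituent assertions---strict containment, non-completeness, and non-inclusion in $\bbC_p$---in turn, each by an explicit construction or cardinality argument.

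\textbf{Strict containment.} I would use a cardinality count: $\overline{\bbQ}_p$ is countable because $\bbQ_p[x]$ is, whereas every element $\sum_{n \geq 0}[r_n]p^n$ with $r_n \in \bbF_p$ is hyper-algebraic, so $\Lpha$ already contains the uncountable ring $\bbZ_p$.

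\textbf{Non-completeness.} I would exhibit a Cauchy sequence in $\Lpha$ whose limit in $\bbL_p$ is not hyper-algebraic. Let $\ell_n$ denote the $n$-th prime distinct from $p$, and set
\[
f_k \coloneqq \sum_{n=1}^{k} p^{n+1/\ell_n} \in \overline{\bbQ}_p \subseteq \Lpha.
\]
Since $v(f_{k+1}-f_k) = k+1+1/\ell_{k+1} \to \infty$, the sequence is Cauchy in $\bbL_p$. Its limit $f = \sum_{n \geq 1} p^{n+1/\ell_n}$ has support involving arbitrarily large primes $\ell_n$ as denominators, so $\Supp(f)$ fits in no $\tfrac{1}{N}\bbZ[1/p]$; hence $f \notin \Lpha$.

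\textbf{Not a subfield of $\bbC_p$.} I would exploit the countability of $\overline{\bbQ}_p$ against an uncountable family of hyper-algebraic candidates. Consider
\[
\alpha_{(r_n)} \coloneqq \sum_{n \geq 0}[r_n] p^{1-1/p^n}, \quad (r_n) \in \bbF_p^{\bbN},
\]
each of which is hyper-algebraic of valuation $0$ with support contained in $[0,1)$. If $\alpha_{(r_n)} \in \bbC_p$, then choosing $x \in \overline{\bbQ}_p$ with $v(\alpha_{(r_n)} - x) > 1$ forces $\alpha_{(r_n)}$ to equal the truncation of $x$'s Mal'cev-Neumann expansion below valuation $1$. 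The collection of such truncations is the image of the countable set $\overline{\bbQ}_p$, hence is countable; but $|\bbF_p^{\bbN}| = 2^{\aleph_0}$, so uncountably many $\alpha_{(r_n)}$ lie outside $\bbC_p$.

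The main obstacle is the third step: the decisive input is the observation that membership in $\bbC_p$ forces the Mal'cev-Neumann expansion below any prescribed valuation bound to coincide with that of some algebraic number, thereby converting the topological question of lying in $\bbC_p$ into a combinatorial question about truncations, against which the cardinality disparity between $\overline{\bbQ}_p$ and the continuum of free coefficient choices then bites.
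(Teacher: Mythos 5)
Your proposal is correct, and it differs from the paper in an interesting way on the crucial third point. For strict containment you use the cardinality of $\bbZ_p$ against countable $\overline{\bbQ}_p$ (the paper instead deduces strictness as a corollary of $\Lpha\not\subseteq\bbC_p$); for non-completeness you use unbounded prime denominators, essentially the paper's argument (the paper uses the sequence $\sum_{k=1}^{n}p^{k-1/k}$, same idea). The substantive divergence is in showing $\Lpha\not\subseteq\bbC_p$: the paper exhibits a \emph{single} hyper-algebraic element $\sum_{k\geq 1}p^{\lfloor\sqrt{2}\,p^k\rfloor/p^k}$ whose support accumulates at the irrational $\sqrt{2}$, and then invokes Lampert's theorem that supports of elements algebraic over $\bbQ_p$ accumulate only at rationals. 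You instead produce a continuum-sized family $\alpha_{(r_n)}=\sum_{n\geq 0}[r_n]p^{1-1/p^n}$ of hyper-algebraic elements with bounded support, observe that membership in $\bbC_p$ forces agreement of the Teichmüller expansion below valuation $1$ with that of some algebraic number (a correct consequence of density of $\overline{\bbQ}_p$ in $\bbC_p$ together with the uniqueness of the expansion and the fact that Teichmüller lifts of distinct residues differ with valuation $0$), and then count: only countably many truncations arise from $\overline{\bbQ}_p$. Your route avoids Lampert's accumulation-point theorem entirely, at the cost of making the argument non-constructive (it shows \emph{most} $\alpha_{(r_n)}$ lie outside $\bbC_p$ without naming one); the paper's route is constructive but leans on a nontrivial external result. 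Both are sound, and it is worth noting that your cardinality idea is exactly what Poonen uses elsewhere to show $\bbL_p$ is much larger than $\bbC_p$, so it fits naturally into the circle of ideas around these fields.
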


For a hyper-algebraic element $\alpha\in\bbL_p^{\opn{ha}}$, we introduce two new invariants of $\alpha$, called the hyper-tame index $\frakT_\alpha$ and hyper-inertia index $\frakF_\alpha$, defined to be the minimal integers $N$ and $k$ in the conditions given by Poonen respectively. %
For a $p$-adic algebraic number $\alpha\in\overline{\bbQ}_p$, its hyper-algebraic invariants $\frakT_\alpha$ and $\frakF_\alpha$ are closely related to its usual arithmetic invariants.
\begin{introthm}[cf. \Cref{prop:47781}, \Cref{prop:57726},\Cref{thm:53765}]
	Let $\alpha$ be a $p$-adic algebraic number.
	\begin{enumerate}
		\item The hyper-algebraic invariants $\frakT_\alpha$ and $\frakF_\alpha$ do not exceed $[\bbQ_p(\alpha)\colon\bbQ_p]$;
		\item Suppose $\bbQ_p(\alpha)/
			      \bbQ_p$ is an abelian extension of degree $n$. Denote by $\bff_{\bbQ_p(\alpha)}$ the local conductor of $\bbQ_p(\alpha)$ over $\bbQ_p$. Then
		      \begin{enumerate}
			      \item If $\bff_{\bbQ_p(\alpha)}=0$, then $\frakT_\alpha=1$ and $\frakF_\alpha=n$.
			      \item If $\bff_{\bbQ_p(\alpha)}\geq 1$, then $\frakT_\alpha\mid p-1$ and
			            $$\frakF_\alpha\mid\begin{cases}
					            \opn{lcm}(2,n),                                              & \text{ if }\bff_{\bbQ_p(\alpha)}=1,2;   \\
					            \opn{lcm}\left(2\cdot p^{ \bff_{\bbQ_p(\alpha)}-1},n\right), & \text{ if }\bff_{\bbQ_p(\alpha)}\geq 3.
				            \end{cases}.$$
		      \end{enumerate}
	\end{enumerate}
\end{introthm}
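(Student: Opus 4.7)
For Part (1), write $K := \bbQ_p(\alpha)$ and let $e, f$ be the ramification index and residue degree of $K/\bbQ_p$, so $n := [K:\bbQ_p] = ef$. The value group of $K$ is $\frac{1}{e}\bbZ \subseteq \frac{1}{e}\bbZ[1/p]$, which forces $\Supp(\alpha) \subseteq \frac{1}{e}\bbZ[1/p]$ and hence $\frakT_\alpha \mid e \leq n$. For $\frakF_\alpha$, the plan is to exploit the Frobenius $\varphi$ of $\bbL_p$: it fixes $\bbQ_p$ pointwise and is injective, so its restriction to $\overline{\bbQ}_p \subseteq \bbC_p \subseteq \bbL_p$ is a $\bbQ_p$-automorphism of $\overline{\bbQ}_p$ and therefore permutes the $n$-element Galois orbit of $\alpha$. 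Letting $d \leq n$ denote the length of the $\varphi$-orbit of $\alpha$, the identity $\varphi^d(\alpha) = \alpha$ forces $r_q^{p^d} = r_q$ for every Mal'cev--Neumann coefficient $r_q$ of $\alpha$, hence $\frakF_\alpha \mid d \leq n$.

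For Part (2) with $\bff_{\bbQ_p(\alpha)} = 0$, the extension $K = \bbQ_{p^n}$ is unramified of degree $n$; the value group is $\bbZ$, so $\frakT_\alpha = 1$. The Mal'cev--Neumann coefficients of $\alpha$ lie in $\bbF_{p^n}$ and generate some subfield $\bbF_{p^d}$ with $d \mid n$, forcing $\alpha \in W(\bbF_{p^d})[1/p] = \bbQ_{p^d}$; combined with $\bbQ_p(\alpha) = \bbQ_{p^n}$, this yields $d = n$, whence $\frakF_\alpha = n$.

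For $\bff := \bff_{\bbQ_p(\alpha)} \geq 1$, the plan is to invoke the local Kronecker--Weber theorem to obtain $K \subseteq \bbQ_{p^s}(\zeta_{p^\bff})$ for a suitable $s$, and then to write $\alpha$ as a $\bbQ_{p^s}$-linear combination of powers of $\zeta_{p^\bff}$. The value group of $\bbQ_{p^s}(\zeta_{p^\bff})$ is $\frac{1}{(p-1)p^{\bff-1}}\bbZ \subseteq \frac{1}{p-1}\bbZ[1/p]$, so $\Supp(\alpha) \subseteq \frac{1}{p-1}\bbZ[1/p]$ and $\frakT_\alpha \mid p-1$. For $\frakF_\alpha$, I would import the explicit Mal'cev--Neumann expansion of $\zeta_{p^\bff}$ established in \cite{WangYuan2021,OurArxiv,wangUniformizerFalseTate2024}, whose coefficients lie in $\bbF_{p^2}$ when $\bff \in \{1,2\}$ and in $\bbF_{p^{2p^{\bff-1}}}$ when $\bff \geq 3$. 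Since the coefficients of any element of $\bbQ_{p^s}$ lie in $\bbF_{p^s} \subseteq \bbF_{p^n}$, taking the compositum yields the stated divisibility bound for $\frakF_\alpha$.

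The main obstacle is the case $\bff \geq 3$ of Part (2): one must extract the coefficient field $\bbF_{p^{2p^{\bff-1}}}$ of $\zeta_{p^\bff}$ from the prior expansions, which are built inductively via Kedlaya's transfinite Newton algorithm and pick up new Teichm\"uller roots at each depth coming from the wild ramification of $\bbQ_p(\zeta_{p^\bff})/\bbQ_p(\zeta_p)$. A careful induction on $\bff$, together with class-field-theoretic bookkeeping to control the minimal $s$ in the Kronecker--Weber embedding (so that $s \mid n$ and the $\opn{lcm}$ with $n$ is meaningful), will be required.
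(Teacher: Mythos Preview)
Your argument for $\frakF_\alpha$ in Part~(1) via the Frobenius $\varphi$ is correct and is precisely the paper's argument, phrased in terms of the topological generator of $\calG_{\bbF_p}=\opn{Gal}(\overline{\bbF}_p/\bbF_p)$ rather than the whole group.

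Your argument for $\frakT_\alpha$ in Part~(1), however, has a genuine gap. The implication ``value group of $K$ is $\tfrac{1}{e}\bbZ$, hence $\Supp(\alpha)\subseteq\tfrac{1}{e}\bbZ[1/p]$'' is false: the value group of $K$ only records $v_p(\beta)$ for $\beta\in K^\times$, not the full support of the Mal'cev--Neumann expansion, and after subtracting the leading term $[r_{q_0}]p^{q_0}$ one has typically left $K$ (the Teichm\"uller lift $[r_{q_0}]$ need not lie in $K$). Concretely, take $\alpha=p^{1/p}\cdot\zeta_p$, which the paper itself discusses: here $[\bbQ_p(\alpha):\bbQ_p]=e=p$, yet $\frakT_\alpha=p-1$, so $\frakT_\alpha\nmid e$. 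The paper's route is entirely different: it uses a second family of $\bbQ_p$-automorphisms of $\bbL_p$, namely $\lambda_\xi\colon\sum[r_q]p^q\mapsto\sum[\xi(q)r_q]p^q$ for $\xi\in\opn{Hom}_{\opn{Ab}}(\bbQ/\bbZ,\overline{\bbF}_p^\times)$, and counts how many distinct Galois conjugates of $\alpha$ these produce; a short group-theoretic lemma then converts this count into the bound $\frakT_\alpha\leq[\bbQ_p(\alpha):\bbQ_p]$.

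The same flaw recurs in your Part~(2)(b): the inclusion $\Supp(\alpha)\subseteq\tfrac{1}{p-1}\bbZ[1/p]$ cannot be read off from the value group of $\bbQ_{p^s}(\zeta_{p^{\bff}})$. The correct justification---which you essentially also sketch---is to first establish $\frakT_{\zeta_{p^{\bff}}}=p-1$ (this is the paper's \Cref{prop:14697}, proved by running the transfinite Newton algorithm on $T^p-\zeta_{p^{n-1}}$ and tracking which finite field the successive residue polynomials live over), and then use that $\Lpha(p-1,F)$ is a subfield for any $F$, so that any $\bbQ_{p^s}$-polynomial in $\zeta_{p^{\bff}}$ inherits the bound. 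Apart from the value-group misstep, your overall strategy for Part~(2)---local Kronecker--Weber followed by importing the hyper-algebraic invariants of $\zeta_{p^{\bff}}$---is exactly the paper's.
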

\begin{remark}
	The proof of this result is based on our computation of the truncated expansion of $\zeta_{p^n}$ (cf. \cite{WangYuan2021,OurArxiv}, and also see \Cref{eg:29565} for the precise formula).
\end{remark}

\begin{remark}\leavevmode
	For $\alpha\in \bbL_p$, we denote by $[C_{\frac{1}{p-1}}(\alpha)]$ the coefficient of index $\frac{1}{p-1}$ of the canonical expansion of $\alpha$.
	Based on the truncated expansion of $\zeta_{p^n}$ (cf. \Cref{eg:29565}), we conjecture that for any integer $n\geq 2$ and $p^n$-th primitive root of unity $\zeta_{p^n}$, there exists another $p^n$-th primitive root of unity $\zeta_{p^n}^\prime$ with $C_{\frac{1}{p-1}}(\alpha)=0$ such that $\zeta_{p^n}^{p^{n-1}}=\left(\zeta_{p^n}^\prime\right)^{p^{n-1}}$.%

	If this conjecture holds\footnote{We notice that in a recent preprint (cf. \cite{efimov2024hahnwittseriesgeneralizations}), Efimov claimed (ibid., Section 2) that his main theorem (ibid.) implies $\frakF_{\zeta_{p^n}}=2$ for every $n\geq 1$. With his result, we can bypass the aforementioned conjecture.}, then $\frakF_{\zeta_{p^n}}=2$ for every $n\geq 2$, and consequently $\frakF_\alpha$ divides $\opn{lcm}(2,n)$ for all ramified cases in the above theorem. See the proof of \Cref{prop:14697} for more details. Note that this conjecture is true when $n=2$ (cf. \Cref{lem:2580}).
\end{remark}
Our third result is to give a criterion for hyper-algebraic element to be tamely ramified over $\bbQ_p$:
\begin{introthm}[cf. \Cref{thm:54918new}]
	Let $\alpha\in\Lpha$ be a hyper-algebraic element in $\bbL_p$. Then $\bbQ_p(\alpha)$ is tamely ramified over $\bbQ_p$ if and only if $\Supp(\alpha)\subseteq\frac{1}{\frakT_\alpha}\bbZ$. In this situation, we have $\frakT_\alpha=\frake_\alpha$, $\frakf_\alpha\mid \frakF_\alpha$ and $\frakF_\alpha \mid c$, where $c\coloneqq \opn{ord}_{\opn{lcm}(\frake_\alpha,p^{\frakf_\alpha}-1)}p$ and $\frakf_\alpha$ (resp. $\frake_\alpha$) is the inertia degree (resp. the ramification index) of the extension $\bbQ_p(\alpha)/\bbQ_p$.
\end{introthm}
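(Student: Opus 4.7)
My plan is to use the classical structure theorem for tame extensions of $\bbQ_p$ to expand $\alpha$ as a polynomial in a uniformizer, and then read off its Mal'cev–Neumann expansion directly. For the forward direction, assume $\bbQ_p(\alpha)/\bbQ_p$ is tame with $e=\frake_\alpha$ and $f=\frakf_\alpha$. I would write $\bbQ_p(\alpha)=\bbQ_{p^f}(\pi)$ for a uniformizer $\pi$ satisfying $\pi^e=p\eta$ with $\eta\in\mu_{p^f-1}\subset W(\bbF_{p^f})^\times$, and expand $\alpha=\sum_{i=0}^{e-1}c_i\pi^i$ with $c_i\in\bbQ_{p^f}$. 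The embedding $\bbQ_{p^f}\hookrightarrow\bbL_p$ induced from $\bbQ_p(\alpha)\hookrightarrow\bbL_p$ must be the Teichm\"uller one (it is the unique embedding compatible with reduction to $\overline{\bbF}_p$), so each $c_i$ has Mal'cev–Neumann support in $\bbZ$ with coefficients in $\bbF_{p^f}$. Since $p\nmid e$, the image of $\pi$ in $\bbL_p$ is forced to be of the form $[\mu]\,p^{1/e}$ for some $\mu\in\overline{\bbF}_p^\times$ with $\mu^e$ equal to the residue of $\eta$, and hence $c_i\pi^i$ has support in $i/e+\bbZ$. Summing gives $\Supp(\alpha)\subseteq\tfrac{1}{e}\bbZ$, and unravelling the definition of $\frakT_\alpha$ together with $p\nmid e$ forces $\frakT_\alpha\mid e$ and $\Supp(\alpha)\subseteq\tfrac{1}{\frakT_\alpha}\bbZ$.

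For the converse, suppose $\Supp(\alpha)\subseteq\tfrac{1}{\frakT_\alpha}\bbZ$. First I would argue $p\nmid\frakT_\alpha$ by minimality: if $\frakT_\alpha=pM$, then $\Supp(\alpha)\subseteq\tfrac{1}{M}\bbZ[1/p]$ would contradict the definition of $\frakT_\alpha$. Then $\alpha$ lies inside the complete discretely valued subfield $W(\overline{\bbF}_p)(\!(p^{1/\frakT_\alpha})\!)\subset\bbL_p$ whose value group is $\tfrac{1}{\frakT_\alpha}\bbZ$. Since $\bbQ_p(\alpha)$ is a finite subextension with value group $\tfrac{1}{\frake_\alpha}\bbZ$, one obtains $\frake_\alpha\mid\frakT_\alpha$, so $p\nmid\frake_\alpha$ and $\bbQ_p(\alpha)/\bbQ_p$ is tame. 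Combining this with the forward direction forces $\frakT_\alpha=\frake_\alpha$, and the induced inclusion of residue fields $\bbF_{p^{\frakf_\alpha}}\subseteq\bbF_{p^{\frakF_\alpha}}$ (coming from $\alpha\in W(\bbF_{p^{\frakF_\alpha}})(\!(p^{1/\frakT_\alpha})\!)$) gives $\frakf_\alpha\mid\frakF_\alpha$.

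For $\frakF_\alpha\mid c$, I revisit the forward-direction expansion: all Mal'cev–Neumann coefficients of $\alpha$ live in $\bbF_{p^f}(\mu)$, where $\mu^{\frake_\alpha}$ is the residue of a $(p^{\frakf_\alpha}-1)$-th root of unity. Writing $\bbF_p(\mu)=\bbF_{p^k}$, with $k$ the multiplicative order of $p$ modulo $\opn{ord}(\mu)$, yields $\frakF_\alpha\mid\opn{lcm}(\frakf_\alpha,k)$, so the task reduces to establishing $k\mid c$; this in turn will follow from the bound $\opn{ord}(\mu)\mid\opn{lcm}(\frake_\alpha,p^{\frakf_\alpha}-1)$. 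The main obstacle is exactly this last divisibility: a crude argument only produces $\opn{ord}(\mu)\mid\frake_\alpha(p^{\frakf_\alpha}-1)$, and sharpening this to the $\opn{lcm}$ requires carefully exploiting the freedom to rescale $\pi$ by units of $W(\bbF_{p^f})^\times$ (equivalently, to modify $\eta$ within its class in $\mu_{p^f-1}/\mu_{p^f-1}^{\frake_\alpha}$), together with the joint arithmetic of $\frake_\alpha$ and $p^{\frakf_\alpha}-1$ inside $\overline{\bbF}_p^\times$.
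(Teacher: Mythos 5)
Your plan matches the paper's overall strategy: use the structure theorem for tame extensions to write $\alpha$ as a polynomial in a distinguished uniformizer, then read off support and coefficients from the Mal'cev--Neumann expansion. Your arguments for the equivalence (tame $\Leftrightarrow \Supp(\alpha)\subseteq\tfrac{1}{\frakT_\alpha}\bbZ$), for $\frakT_\alpha=\frake_\alpha$, and for $\frakf_\alpha\mid\frakF_\alpha$ are sound and essentially the paper's.

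For $\frakF_\alpha\mid c$ your hesitation is well-founded, and in fact the gap you isolate is not closeable: that divisibility is \emph{false} as stated. Your setup $\pi^e=p\eta$ with $\eta\in\mu_{p^f-1}$ and $\pi=[\mu]p^{1/e}$, $\mu^e=\bar\eta$, is the correct normalization, and it yields only $\opn{ord}(\mu)\mid \frake_\alpha(p^{\frakf_\alpha}-1)$. Rescaling $\pi$ by Teichm\"uller units $[\lambda]$ with $\lambda\in\bbF_{p^f}^\times$ cannot help: since $\lambda^{p^f-1}=1$ and $(p^f-1)\mid\opn{lcm}(\frake_\alpha,p^{\frakf_\alpha}-1)$, one has $(\mu\lambda)^{\opn{lcm}(\frake_\alpha,p^{\frakf_\alpha}-1)}=\mu^{\opn{lcm}(\frake_\alpha,p^{\frakf_\alpha}-1)}$, so the order modulo $\opn{lcm}$ is unchanged. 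A concrete counterexample: take $p=5$ and $\alpha=\sqrt{10}$. Then $\bbQ_5(\alpha)/\bbQ_5$ is ramified quadratic, so $\frake_\alpha=2$, $\frakf_\alpha=1$, and $c=\opn{ord}_{\opn{lcm}(2,4)}5=\opn{ord}_4 5=1$. But the leading Mal'cev--Neumann coefficient $C_{1/2}(\sqrt{10})$ is a square root of $\bar 2\in\bbF_5^\times$, a primitive $8$th root of unity in $\bbF_{25}\setminus\bbF_5$, so $\frakF_\alpha=2\nmid 1$. What your argument honestly delivers, namely $\frakF_\alpha\mid\opn{ord}_{\frake_\alpha(p^{\frakf_\alpha}-1)}p$, is the correct bound.

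The paper sidesteps your difficulty through \Cref{lem:57300new}, which asserts that the uniformizer may be normalized so that $\pi^{\frake_\alpha}=p$ exactly (hence $\mu\in\mu_{\frake_\alpha}$). That lemma's proof applies Hensel's lemma to $T^{\frake_\alpha}-\overline{u}$ over the residue field $\bbF_{p^{\frakf_\alpha}}$, but this polynomial need not have a root there -- $\overline{u}$ need not lie in $(\bbF_{p^{\frakf_\alpha}}^\times)^{\frake_\alpha}$. Indeed, for $\bbQ_5(\sqrt{10})$ no uniformizer squares to $5$, since $\bbQ_5(\sqrt{10})\ne\bbQ_5(\sqrt{5})$. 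So the paper's proof contains a genuine error at precisely the point where you flagged uncertainty; your more cautious normalization $\pi^e=p\eta$ is the right one.
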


\begin{remark}
	It seems that our method for abelian and tamely ramified extensions can hardly be generalized to general extensions. For these two special cases, the key ingredient is to find an extension $K$ over $\mathbb{Q}_p(\alpha)$, which is generated by certain more ``controllable'' elements. In the abelian case, we use the cyclotomic extension by the local Kronecker-Weber theorem while in the tamely ramified case, we used the radical extension by \Cref{lem:57300new}. However, in general, we don't know how to find such a more ``controllable'' field.%
\end{remark}

\subsection{Distinguishing roots of irreducible polynomial over $\mathbb{Q}_p$}\label{subsec:48915}
The canonical expansion of an element in $\bbL_p$ is fairly an analogy of the polar coordinate of a complex number. In fact, the support $\Supp(f)$ of $f\in \bbL_p$ corresponds to the modulus of a complex number while the set $\{r_q\}_{q\in\bbQ}$ of coefficients of the expansion of $f$ corresponds to the argument of a complex number. As a result, such an expansion can be used to make a distinction of roots of polynomials over $\bbQ_p$.

Given a $p$-adic algebraic number $\alpha$, the usual arithmetic invariants (i.e. the degree, ramification index and inertia degree of the extension $\bbQ_p(\alpha)/\bbQ_p$) of $\alpha$ are determined by its minimal polynomial over $\bbQ_p$. Thus, the usual arithmetic invariants can not be used to distinguish the conjugates of $\alpha$ under the action of absolute Galois group of $\overline{\bbQ}_p$. %
We observe that in general the minimal polynomial of $\alpha$ over $\bbQ_p$ is insufficient to determine the exact value of $\frakT_\alpha$ and $\frakF_\alpha$. 
\begin{example}[cf. \Cref{eg:32730}]
	Let $\alpha_1=p^{1/p}$ and $\alpha_2=p^{1/p}\cdot \zeta_p$, where $\zeta_p\in\overline{\bbQ}_p$ is a $p$-th primitive root of unity. Then $\alpha_1$ and $\alpha_2$ share the same minimal polynomial $T^p-p$ over $\bbQ_p$, but $\frakT_{\alpha_1}=\frakF_{\alpha_1}=1$ while $\frakT_{\alpha_2}=p-1$ and $\frakF_{\alpha_2}=2$.
\end{example}
Thus, it provides the possibility to make a distinction of root of a polynomial using these two new invariants.

On the other hand, for a $p$-adic algebraic number $\alpha$, its classical arithmetic invariants are related to the hyper-algebraic invariants of all its conjugates.
The above example suggests that it makes sense to consider the hyper-algebraic invariants of all conjugate of $\alpha$ at the same time. Let $\frakT(\alpha)$ (resp. $\frakF(\alpha)$) be the set of hyper-tame indices (resp. hyper-inertia indices) of all the conjugates of $\alpha$, equipped with the partial order defined by divisibility. A small-scale numerical experiment indicates the following heuristic patterns:
\begin{enumerate}
	\item The degree of the minimal polynomial of $\alpha$ over $\mathbb{Q}_p$ is always an upper bound of $\frakF(\alpha)$ in $\bbZ_{>0}$ with respect to the order defined by divisibility.
	\item The $p$-power-free part of the ramification index of the field $\bbQ_p(\alpha)$ over $\bbQ_p$ is always the unique minimal element in $\frakT(\alpha)$.
\end{enumerate}

\subsection{Related works}
We mention some potential approaches to study the canonical expansion of general $p$-adic algebraic numbers in $\Lpha$:
\begin{enumerate}
	\item In \cite[Theorem 13.4]{kedlayaAlgebraicityGeneralizedPower2017b}, Kedlaya gives a characterization of the canonical expansion of elements of $\calO_{\bbC_p}$ in $\bbL_p$ in terms of the so-called ``$p$-quasi-automatic elements''. Extracting additional arithmetic information from these logic-derived objects could offer a fresh perspective on comprehending the hyper-algebraic invariants.
	\item In \cite{lisinskiApproximationAlgebraicityPositive2023}, Lisinski uses a variant of Newton algorithm to give an upper bound of the order type of $\Supp(\alpha)$ for element $\alpha$ in $\overline{\bbF_p(\!(t)\!)}\subset\overline{\bbF}_p(\!(t^\bbQ)\!)$. Besides that, Lisinski also designs an algorithm to give upper bounds for the characteristic $p$ analog of hyper-algebraic invariants for elements in $\overline{\bbF_p(\!(t)\!)}\subset\overline{\bbF}_p(\!(t^\bbQ)\!)$. It is possible to develop a mixed-characteristic analog of Lisinski's results for $\overline{\bbQ}_p\subset\bbL_p$ and to compare with \Cref{prop:47781} of this paper.
	\item Inspired by the pioneering work \cite{Dong2024} of Dong-He-Jin-Schremmer-Yu, which using machine learning approach to study the geometry of affine Deligne-Lusztig varieties, we wonder if the machine learning method can help to identify hidden structures in the canonical expansion of a $p$-adic algebraic number in $\Lpha$.
\end{enumerate}

\paragraph{\textbf{Acknowledgement}} The authors would like to express their gratitude to Haotian Liu, Yicheng Tao and Hongteng Xu for the useful discussion. We also thank Will Sawin for his answer on Mathoverflow (cf. \cite{463800}), which motivates the formulation of \Cref{lem:15272} in this work, and thank the anonymous referee for the careful reading and valuable suggestions that help to improve the presentation of this article. Special thanks to Beijing PARATERA Tech Corp., Ltd for providing a part of the HPC resources for the numerical experiment at the early stage of this research. The first author is supported by the Fundamental Research Funds for the Central Universities and the Research Funds of Renmin University of China \textnumero 20XNLG04, and The National Natural Science Foundation of China (Grant \textnumero 11971035).

\section{Preliminaries on field of Mal'cev-Neumann series}
In this section, we recall the definition and some basic properties of Mal'cev-Neumann fields following \cite{Poonen1993}.
\begin{definition}[{\cite[Section 3]{Poonen1993}}]
	Let $R$ be a commutative ring and $G$ be an ordered group.
	\begin{enumerate}
		\item For any $f\in\opn{Hom}_{\mathbf{Set}}(G,R)$, we define the \textbf{support} of $f$ to be $$\opn{supp}(f)=\{g\in G\colon f(g)\neq 0\}.$$
		\item Define the set of \textbf{Mal'cev-Neumann series} over $R$ with value group $G$ to be
		      $$R(\!(G)\!)\coloneqq \{f\in \opn{Hom}_{\mathbf{Set}}(G,R)\colon \opn{supp}(f) \text{ is well-ordered}\}.$$
		      By introducing a formal variable $t$, we also denote by $R(\!(t^G)\!)$ the set $R(\!(G)\!)$. The elements in $R(\!(G)\!)$ will also be written as $\sum_{g\in G}r_gt^g$, where $r_g\in R$ for all $g\in G$.
	\end{enumerate}
\end{definition}
\begin{proposition}[{\cite[Lemma 1,Corollary 2]{Poonen1993}}]
	Let $R$ be a commutative ring and $G$ be an ordered group.
	\begin{enumerate}
		\item With identity $1\cdot t^0$ and addition as well as multiplication given by
		      $$\sum_{g\in G}a_g t^g+\sum_{g\in G}b_g t^g\coloneqq\sum_{g\in G}(a_g+b_g)t^g,\ \sum_{g\in G}a_g t^g\cdot\sum_{g\in G}b_g t^g\coloneqq\sum_{g\in G}\left(\sum_{h\in G}a_h b_{g-h}\right)t^g,$$
		      $R(\!(t^G)\!)$ forms a commutative ring.
		\item If $R$ is a field, then so does $R(\!(t^G)\!)$. Moreover, with the map
		      $$v\colon R(\!(t^G)\!)\lto G\cup\{\infty\},\ f\longmapsto \begin{cases}\min\opn{supp}(f),& \text{ if }f\neq 0\\\infty,& \text{ if }f=0\end{cases},$$
		      $R(\!(t^G)\!)$ becomes a valued field with value group $G$ and residue field $R$.
	\end{enumerate}
\end{proposition}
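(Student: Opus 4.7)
The plan is to treat the two parts in turn, with the crucial combinatorial input being a well-known lemma of Neumann on sums of well-ordered subsets of an ordered group.

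For part (1), the first step is to verify that the multiplication formula is well-defined: given $f=\sum_g a_g t^g$ and $h=\sum_g b_g t^g$ with $\opn{supp}(f)$ and $\opn{supp}(h)$ well-ordered, I must show that for every $g\in G$ the inner sum $\sum_{h\in G}a_h b_{g-h}$ has only finitely many nonzero terms, and that $\opn{supp}(fh)\subseteq\opn{supp}(f)+\opn{supp}(h)$ is well-ordered. Both follow from the classical statement: if $A,B\subseteq G$ are well-ordered, then $A+B$ is well-ordered and each $c\in A+B$ admits only finitely many decompositions $c=a+b$ with $a\in A$, $b\in B$. I would prove this by assuming a strictly decreasing sequence in $A+B$ (or an infinite fibre over some $c$) and extracting, via Ramsey-type argument on pairs of indices, a strictly decreasing sequence in either $A$ or $B$. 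Once this is in hand, the ring axioms (associativity and distributivity) reduce to interchanging finite sums, and commutativity is immediate.

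For part (2), assume $R$ is a field. The main task is to construct inverses. Given $f\neq 0$ with $g_0\coloneqq v(f)=\min\opn{supp}(f)$, write
\[
f=a_{g_0}t^{g_0}(1-u),\qquad u\coloneqq 1-a_{g_0}^{-1}t^{-g_0}f,
\]
so that $\opn{supp}(u)\subseteq G_{>0}$. I would then verify that the formal series $\sum_{n\geq 0}u^n$ defines an element of $R(\!(G)\!)$: the support of $u^n$ lies in the $n$-fold sumset, and using the Neumann lemma one shows that $S\coloneqq\bigcup_{n\geq 0}\opn{supp}(u^n)$ is well-ordered and that each $g\in S$ belongs to $\opn{supp}(u^n)$ for only finitely many $n$, so that the coefficientwise sum makes sense. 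Then $(1-u)\sum_n u^n=1$ by a formal telescoping argument, yielding $f^{-1}=a_{g_0}^{-1}t^{-g_0}\sum_n u^n$.

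For the valuation, I would check $v(fg)=v(f)+v(g)$ (the minimum of the product's support is achieved exactly once, at $v(f)+v(g)$, because of the unique minimality of $g_0$ and $h_0$) and the ultrametric inequality $v(f+g)\geq\min(v(f),v(g))$, which is immediate. It follows that $R(\!(G)\!)$ is a valued field. The value group is all of $G$ since $v(t^g)=g$, and the residue field identifies with $R$ via the map $f\mapsto a_0$ restricted to $\{f : v(f)\geq 0\}$, modulo $\{f : v(f)>0\}$; this is a well-defined ring homomorphism with obvious section $r\mapsto r\cdot t^0$, giving the isomorphism.

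The main obstacle is the Neumann lemma on well-ordered sumsets, since everything else is formal once that is secured; in particular the convergence of the geometric series for inverses is a direct consequence of it applied to the positive part of $G$.
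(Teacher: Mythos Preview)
Your proposal is correct and follows the standard route. Note, however, that the paper does not supply its own proof of this proposition: it is stated with a citation to \cite[Lemma~1, Corollary~2]{Poonen1993} and left without argument. What you have sketched is essentially the classical Mal'cev--Neumann argument (and the one Poonen gives): the Neumann lemma on well-ordered sumsets to make multiplication well-defined, and the geometric-series inversion $(1-u)^{-1}=\sum_{n\geq 0}u^n$ for $u$ with strictly positive support, together with the routine verification of the valuation axioms. There is nothing to compare against in the present paper, but your outline matches the cited source and is sound.
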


\begin{theorem}[{\cite[Proposition 3, Corollary 3, Proposition 5]{Poonen1993}}]
	Let $k$ be a perfect field of characteristic $p$ and $G$ be an ordered group containing $\bbZ$ as a subgroup. Besides that, let
	$$\calN\coloneqq \left\{\sum_{g\in G}r_g t^g\in W(k)(\!(t^G)\!)\colon \text{ for every } g\in G,\ \sum_{n\in\bbZ}r_{g+n}p^n=0\right\},$$
	where $W(k)$ is the ring of Witt vectors of $k$. Then
	\begin{enumerate}
		\item $\calN$ is a maximal ideal of $W(k)(\!(t^G)\!)$, which makes $W(k)(\!(p^G)\!)\coloneqq W(k)(\!(t^G)\!)/\calN$ a field\footnote{Intuitively speaking, $W(k)(\!(p^G)\!)$ is obtained by replacing the formal variable $t$ of elements in $W(k)(\!(t^G)\!)$ by the prime $p$.}, called the \textbf{$p$-adic Mal'cev-Neumann field}.
		\item Every element in $W(k)(\!(p^G)\!)$ can be uniquely (and formally) written as $$\sum_{g\in G}[r_g]p^g,$$
		      where $r_g\in k$ for all $g\in G$ and $[\cdot]\colon k\lto W(k)$ is the Teichmüller lift.
		\item For $f=\sum_{g\in G}[r_g]p^g$, define the \textbf{support} of $f$ to be
		      $$\opn{supp}(f)=\{g\in G\colon r_g\neq 0\}.$$
		      Then the map
		      $$v\colon W(k)(\!(G)\!)/\calN\lto G\cup\{\infty\},\ f\mapsto \begin{cases}
				      \min\opn{supp}(f), & \text{ if }f\neq 0 \\\infty,& \text{ if }f=0
			      \end{cases}$$
		      makes $W(k)(\!(G)\!)/\calN$ a mixed-characteristic valued field with value group $G$ and residue field $k$.
	\end{enumerate}
\end{theorem}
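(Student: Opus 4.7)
The strategy is to establish parts (1)--(3) by constructing an explicit bijection between $W(k)(\!(t^G)\!)/\calN$ and the set of formal Teichm\"uller expansions
\[
T\coloneqq\left\{\sum_{g\in G}[s_g]t^g\in W(k)(\!(t^G)\!):(s_g)\text{ is a family in }k\text{ with well-ordered support}\right\},
\]
via the restriction of the quotient map $\pi\colon W(k)(\!(t^G)\!)\to W(k)(\!(t^G)\!)/\calN$. The guiding intuition is that $\calN$ encodes the infinite relation ``$t=p$'': indeed, $p^nt^g-t^{g+n}\in\calN$ for every $n\geq 0$ and $g\in G$ by direct check of the defining condition index by index. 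All three parts of the theorem will follow from this bijection once the natural Mal'cev--Neumann valuation on $T$ is transported to the quotient.

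First, I would check that $\calN$ is an ideal. Closure under addition is immediate. For closure under multiplication by an arbitrary $\sum a_ht^h$, the $m$-th coefficient of the product of $\sum r_gt^g\in\calN$ with $\sum a_ht^h$ is $c_m=\sum_{h+g=m}a_hr_g$, and swapping the order of summation gives $\sum_n c_{m+n}p^n=\sum_h a_h\sum_n r_{m-h+n}p^n=0$, where absolute convergence in $W(k)$ is guaranteed by the well-ordering of supports.

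Next, I would show that $\pi|_T$ is a bijection. For injectivity: if $\sum[s_g]t^g\in\calN$ is nonzero, let $g_0\coloneqq\min\{g:s_g\neq 0\}$; the defining condition at index $g_0$ reads $\sum_{n\geq 0}[s_{g_0+n}]p^n=0$ in $W(k)$ (terms with $n<0$ vanish by minimality), and uniqueness of the Teichm\"uller expansion in $W(k)$ forces every $s_{g_0+n}=0$, a contradiction. For surjectivity, given $f=\sum r_gt^g$ with $S\coloneqq\Supp(f)$ well-ordered, I would construct $(s_g)$ by transfinite recursion over $S+\bbZ_{\geq 0}$: at each $g$, let $\tilde r_g$ be the accumulated coefficient at position $g$ (the original $r_g$ plus remainders pushed in from earlier steps), extract the Teichm\"uller component $[s_g]\in W(k)$ with $\tilde r_g\equiv[s_g]\pmod p$, and push the remainder $(\tilde r_g-[s_g])/p$ into the coefficient at $g+1$; this is legitimate modulo $\calN$ since $pt^g\equiv t^{g+1}$. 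The key finiteness observation is that for any fixed $h\in G$ the set $\{n\in\bbZ_{\geq 0}:h-n\in S\}$ is finite -- otherwise $S$ would contain an infinite descending chain $h-n_1>h-n_2>\cdots$ -- so only finitely many earlier steps feed into position $h$ at each stage; a parallel argument shows $S+\bbZ_{\geq 0}$ is itself well-ordered. This yields part (2).

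Finally, transport the natural Mal'cev--Neumann valuation from $T$ to the quotient via $v(\pi(\sum[s_g]t^g))\coloneqq\min\{g:s_g\neq 0\}$; direct computation confirms $v$ is a valuation with value group $G$ and residue field $k$, giving (3). For maximality of $\calN$ in (1), it suffices to invert a nonzero element of the quotient: write its normal form as $[s_{g_0}]t^{g_0}(1+\varepsilon)$ with $v(\varepsilon)>0$, invert $[s_{g_0}]\in W(k)^\times$ (a unit since $s_{g_0}\neq 0$ in the perfect field $k$), and invert $1+\varepsilon$ via the Neumann series $\sum_{n\geq 0}(-\varepsilon)^n$, which converges in $W(k)(\!(t^G)\!)$ by well-ordering of supports. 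The main obstacle is the transfinite recursion in the surjectivity step: one must carefully verify that the limit family $(s_g)$ is well-defined and that the total ``pushed-out'' modification collectively lies in $\calN$, via bookkeeping of well-ordering of supports under translations by $\bbZ_{\geq 0}$ and $p$-adic convergence in $W(k)$.
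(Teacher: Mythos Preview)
The paper does not give its own proof of this statement; it is quoted as background from Poonen's 1993 paper. Your proposal is a correct reconstruction of the standard argument (which is essentially Poonen's): build the Teichm\"uller normal form by a transfinite carrying process, deduce maximality of $\calN$ by inverting nonzero elements via a Neumann series, and read off the valuation from the normal form.

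One small point worth tightening: your injectivity step as written only establishes $T\cap\calN=\{0\}$, which does not immediately yield injectivity of $\pi|_T$ since $T$ is not closed under subtraction. The obvious variant closes the gap: given two Teichm\"uller expansions $\sum[s_g]t^g$ and $\sum[s'_g]t^g$ whose difference lies in $\calN$, take $g_0=\min\{g:s_g\neq s'_g\}$; the defining condition at index $g_0$ then reads $\sum_{n\geq 0}[s_{g_0+n}]p^n=\sum_{n\geq 0}[s'_{g_0+n}]p^n$ in $W(k)$, and uniqueness of the Teichm\"uller expansion in $W(k)$ forces $s_{g_0}=s'_{g_0}$, a contradiction. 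With this adjustment your argument is complete.
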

\begin{example}
	Let $\bbL_p\coloneqq W(\overline{\bbF}_p)(\!(p^\bbQ)\!)$ be the $p$-adic Mal'cev-Neumann field with value group $\bbQ$ and residue field $\overline{\bbF}_p$. It can be shown that $\bbL_p$ is complete and algebraically closed (cf. \cite[Proposition 6, Theorem 1]{Poonen1993}). Moreover, $\bbL_p$ is the unique (up to non-canonical isomorphism) minimal spherically complete extension of $\bbC_p$ (cf. \cite[Corollary 6]{Poonen1993}), with transcendence degree $2^{\aleph_0}$ over $\bbC_p$ (cf. \cite[Corollary 9]{Poonen1993}).
\end{example}

\subsection*{Newton algorithm on $\bbL_p$}
In \cite{kedlayaPowerSeriesPAdic2001}, Kedlaya gives a constructive proof of the algebraic closeness of $\bbL_p$ by using a transfinite Newton algorithm, which we extract in the following.

For a non-constant polynomial $P(T)=\sum_{i=0}^n a_{n-i}T^i\in \bbL_p[T]$, denote by $\Newt{P}$ the Newton polygon of $P$, i.e. the lower boundary of the convex hull of the set of points $(i,v_p(a_i))$ for $i=0,1,\cdots,n$. We write $s_{\max}^P$ for the slope of the segment of $\Newt{P}$ with the largest slope and $m_{\max}^P$ the left endpoint of this segment. Besides that, call
$$\opn{Res}_P(T)\coloneqq \sum_{k=0}^{n-m_{\max}^P}C_{v_p(a_m)+s_{\max}^P(n-m_{\max}^P-k)}\left(a_{n-k}\right)T^k$$
the residue polynomial of $P$, where for any $s\in\bbQ$, the map
$C_s\colon\bbL_p\lto \overline{\bbF}_p$ is given by $\sum_{q\in\bbQ}[r_q]p^q\longmapsto r_s$.

We extracted Kedlaya's proof into the following pseudocode:
\begin{algorithm}[H]
	\caption{transfinite Newton algorithm for \(\bbL_p\)}    \begin{algorithmic}
		\INPUT A non-constant polynomial \(P(T)\in \bbL_p[T]\)
		\OUTPUT A root of \(P(T)\) in \(\bbL_p\)
		\State \(r\gets 0, \Phi(T)\gets P(T)\) \Comment{We denote the coefficient of \(T^i\) in \(\Phi\) as \(b_{n-i}\).
		}
		\While{\(\Phi(0)\neq 0\)}\Comment{This loop runs transfinitely.}
		\State \(c\gets\) any root of \(\mathrm{Res}_{\Phi}(T)\) in \(\bar{\bbF}_p\)
		\State \(r\gets r+[c]\cdot p^{s_{\max}^\Phi}\)
		\State \(\Phi(T)\gets \Phi(T+[c]\cdot p^{s_{\max}^\Phi})\)
		\EndWhile
		\State \Return \(r\)
	\end{algorithmic}
\end{algorithm}
We refer to \cite{WangYuan2021} for a full explanation of this algorithm.

Let $r=\sum_{\omega}[c_\omega]p^{k_\omega}\in\bbL_p$, with ordinal $\omega$ runs through the well-ordered set $\Supp(r)$, be a root of $P(T)$ given by the above algorithm. For the convenience of later discussion, we call $r_\omega=\sum_{r<\omega}[c_\omega]p^{k_\omega}$ the $\omega$-th approximation of $r$, $P_\omega=P(T+r_\omega)$ the $\omega$-th approximation polynomial
and $\opn{Res}_{P_\omega}(T)$ the $\omega$-th residue polynomial.

\begin{example}[\cite{WangYuan2021,OurArxiv}]\label[example]{eg:29565}
	Let $\zeta_{2(p-1)}\in W(\bbF_{p^2})$ be a $2(p-1)$-th primitive root of unity.
	\begin{enumerate}
		\item There exist a $p$-th root of unity, whose canonical expansion in $\bbL_p$ is given by
		      $$\zeta_p=\sum_{k=0}^{p-1}\frac{\zeta_{2(p-1)}^k}{k!}p^{\frac{k}{p-1}}+\sum_{k=p}^\infty [c_k]p^{\frac{k}{p-1}},$$
		      where $c_k\in\bbF_{p^2}$ for all $k\geq p$. By changing the choice of $\zeta_{2(p-1)}$, every $p$-th primitive root of unity can be expanded in the above form.
		\item For $n\geq 2$, there exists a $p^n$-th root of unity, whose (non-canonical) expansion in $\bbL_p$ is partially given by
		      \begin{align*}
			      \zeta_{p^n}= & \sum_{k=0}^{p-1}\frac{(-1)^{nk}}{k!}\zeta_{2(p-1)}^k p^{\frac{k}{p^{n-1}(p-1)}}+\sum_{k=0}^{p-1}\frac{(-1)^{n(k+1)}}{k!}\zeta_{2(p-1)}^{k+1}p^{\frac{k+p}{p^{n-1}(p-1)}}\left(\sum_{l=n}^\infty p^{-1/p^l}\right) \\
			                   & \quad-\sum_{k=1}^{p-1}\frac{(-1)^{n(k+1)}}{k!}\left(\sum_{l=1}^k \frac{1}{l}\right)\zeta_{2(p-1)}^{k+1}p^{\frac{k+p}{p^{n-1}(p-1)}}                                                                               \\
			                   & \quad+\frac{1}{2}\zeta_{2(p-1)}^2p^{\frac{2}{p^{n-2}(p-1)}}\left(\sum_{l=n}^\infty p^{-1/p^l}\right)^2+\frac{(-1)^n}{2}\zeta_{2(p-1)}^3 p^{\frac{2}{p^{n-2}(p-1)}-\frac{p-2}{p^n(p-1)}}                           \\
			                   & \quad +\text{ terms with higher valuation}\cdots.
		      \end{align*}
	\end{enumerate}
\end{example}

\section{Field of hyper-algebraic elements in $\bbL_p$}
\subsection{Hyper-algebraic elements}

\begin{definition}
	We call an element $f=\sum_{q\in \bbQ}[r_q]p^q\in\bbL_p$ \textbf{hyper-algebraic}, if it satisfies:
	\begin{enumerate}
		\item there exists a positive integer $N$ such that $\opn{supp}(f)\subseteq \frac{1}{N}\bbZ[1/p]$;
		\item there exists a positive integer $k$ such that $r_q\in\bbF_{p^k}$ for all $q\in\opn{supp}(f)$.
	\end{enumerate}
	Denote by $\Lpha$ the set of all hyper-algebraic elements in $\bbL_p$.
\end{definition}
By \cite[Corollary 8]{Poonen1993}, we know that
\begin{proposition}
	The set $\Lpha$ forms an algebraically closed field. As a consequence, all $p$-adic algebraic numbers are hyper-algebraic, i.e. $\overline{\bbQ}_p\subseteq \Lpha$.
\end{proposition}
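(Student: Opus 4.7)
The plan is to verify that $\Lpha$ is a subfield of $\bbL_p$ and is algebraically closed; given these, $\overline{\bbQ}_p\subseteq\Lpha$ follows automatically from $\bbQ_p\subseteq\Lpha$. Throughout I use the decomposition $\Lpha=\bigcup_{N,k\geq 1}\Lpha(N,k)$ where
\[\Lpha(N,k):=\left\{f\in\bbL_p:\opn{supp}(f)\subseteq\tfrac{1}{N}\bbZ[1/p]\text{ and }r_q\in\bbF_{p^k}\text{ for all }q\in\opn{supp}(f)\right\}.\]
Each $\Lpha(N,k)$ coincides with the Mal'cev-Neumann subfield $W(\bbF_{p^k})(\!(p^{\frac{1}{N}\bbZ[1/p]})\!)\subseteq\bbL_p$, hence is a maximally complete field by \Cref{thm:8056}. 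The family is directed under the ordering $(N,k)\leq(N',k')\iff N\mid N'\text{ and }k\mid k'$, and the inclusions respect the Mal'cev-Neumann ring operations, so $\Lpha$ — being the directed union of subfields of $\bbL_p$ — is itself a subfield.

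For algebraic closure, given a non-constant $P\in\Lpha[T]$ of degree $n$ with $P\in\Lpha(N_0,k_0)[T]$, I apply the transfinite Newton algorithm to extract a root $r\in\bbL_p$. By transfinite induction on the loop counter $\omega$, the key step is to show that the partial approximations $r_\omega$, the shifted polynomials $P_\omega=P(T+r_\omega)$, and the chosen slope/residue pair $(s_\omega,c_\omega)$ all remain within a single $\Lpha(N_1,k_1)$ for $(N_1,k_1)$ depending only on $(N_0,k_0,n)$; once this is established, the transfinite sum $r=\sum_\omega[c_\omega]p^{s_\omega}$ lies in $\Lpha(N_1,k_1)\subseteq\Lpha$, and algebraic closure follows.

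The main obstacle is establishing these uniform bounds, which is the real content of Poonen's Corollary 8. Naively each iteration can enlarge the support denominator by a factor of up to $n$ (slopes of $\Newt{P_\omega}$ are quotients $(v_p(b_i)-v_p(b_j))/(j-i)$ with $j-i\leq n$) and the coefficient field by a factor of up to $n$ (residue polynomials of degree $\leq n$ have roots in degree-$\leq n$ extensions), so over transfinitely many iterations these would diverge. Poonen's resolution, which I plan to follow, is to bound everything in one shot via an auxiliary polynomial built from $P$: the slopes appearing throughout the algorithm are controlled by the valuations of roots of a polynomial such as $\prod_{i\neq j}(T-(r_i-r_j))$, whose coefficients are symmetric in the roots $r_i$ of $P$ and therefore lie in $\Lpha(N_0,k_0)$, and a single application of the Newton polygon theorem to this auxiliary polynomial bounds all slope denominators inside a fixed lattice $\tfrac{1}{N_1}\bbZ[1/p]$; a parallel argument controls the residue fields of all the $c_\omega$ inside a fixed $\bbF_{p^{k_1}}$. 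With uniformity secured, the induction closes. Finally, $\bbQ_p\subseteq\Lpha(1,1)$ via the canonical $p$-adic expansion $\sum a_np^n\mapsto\sum[a_n]p^n$ with $a_n\in\bbF_p$, so every $\alpha\in\overline{\bbQ}_p$ is a root of a polynomial in $\bbQ_p[T]\subseteq\Lpha[T]$ and therefore lies in the algebraically closed field $\Lpha$.
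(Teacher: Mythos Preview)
The paper's own proof is a one-line citation to Poonen's Corollary~8, so you are supplying far more than the paper does. But your route diverges from Poonen's actual mechanism, and the central step in your sketch does not close.

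The gap is in your uniform-bound argument. You claim that the slopes $s_\omega$ produced by the transfinite Newton algorithm are ``controlled by the valuations of roots of $\prod_{i\neq j}(T-(r_i-r_j))$''. But each $s_\omega$ is $\max_i v_p(r_i-r_\omega)$, where $r_\omega$ is the current partial approximation, not a root. Once the algorithm has locked onto the target root $r=r_1$ (i.e.\ once $s_\omega>\max_{j\neq 1}v_p(r_1-r_j)$), the subsequent slopes $s_\omega$ are precisely the remaining elements of $\Supp(r_1)$ itself --- exactly the unknown you are trying to bound. The auxiliary polynomial only pins down the finitely many numbers $v_p(r_i-r_j)$; it tells you when lock-on occurs, not what happens afterwards. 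So the transfinite induction does not close on the data you have arranged.

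Poonen's argument, which the paper itself exploits in the proof of \Cref{prop:47781}, bypasses the algorithm entirely and uses automorphisms of $\bbL_p$. Given $P\in\Lpha(N_0,k_0)[T]$ of degree $n$, the $k_0$-th power of the coefficient-Frobenius $\varphi\colon\sum[r_q]p^q\mapsto\sum[r_q^p]p^q$ fixes $\Lpha(N_0,k_0)$ pointwise and hence permutes the $n$ roots of $P$ in $\bbL_p$; thus $\varphi^{k_0\cdot n!}$ fixes every root, forcing all coefficients of each root into $\bbF_{p^{k_0\cdot n!}}$. The support bound comes the same way from the automorphisms $\lambda_\xi$ with $\xi$ trivial on $\tfrac{1}{N_0}\bbZ[1/p]/\bbZ$: these fix $\Lpha(N_0,k_0)$, permute the roots, and the finiteness of the orbit forces each $\Supp(r_i)$ into a single $\tfrac{1}{N_1}\bbZ[1/p]$. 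This gives the algebraic closedness of $\Lpha$ cleanly, after which your final paragraph (that $\bbQ_p\subseteq\Lpha(1,1)$, hence $\overline{\bbQ}_p\subseteq\Lpha$) is correct as written.
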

We clarify the relations among the fields $\Lpha$, $\overline{\bbQ}_p$ and $\bbC_p$:
\begin{theorem}\label{prop:54163}\leavevmode
	\begin{enumerate}
		\item The fields $\Lpha$ and  $\bbC_p$ do not contain each other. In particular, $\Lpha$ contains $\overline{\bbQ}_p$ as a proper subfield.
		\item The field $\Lpha$ is not complete, and its completion is a proper subfield of $\bbL_p$.
	\end{enumerate}
\end{theorem}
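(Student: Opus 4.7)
The plan splits naturally along the four assertions in the statement. Both parts rely on two structural principles: membership in $\Lpha$ is constrained by the existence of a single integer $N$ clearing all denominators of the support and a single $k$ bounding the size of the residue field generated by the coefficients, while membership in $\bbC_p$ is constrained by Lampert's theorem, which forces every accumulation point of the support of an algebraic element to be rational.

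For $\bbC_p\not\subseteq\Lpha$ I would exhibit $\beta=\sum_{n\geq 0}[a_n]p^n\in W(\overline{\bbF}_p)\subseteq\bbC_p$ with coefficients $a_n\in\overline{\bbF}_p$ chosen so that $\bbF_p(\{a_n:n\geq 0\})=\overline{\bbF}_p$, for instance by letting $a_n$ be a primitive element of $\bbF_{p^{n!}}$; then no single $\bbF_{p^k}$ can contain all the $a_n$ and the coefficient condition fails, so $\beta\notin\Lpha$. For the harder direction $\Lpha\not\subseteq\bbC_p$, I would fix any strictly increasing sequence $\{q_n\}_{n\geq 1}\subset\bbZ[1/p]$ converging to $\sqrt{2}$ (available since $\bbZ[1/p]$ is dense in $\bbR$) and set $\alpha=\sum_{n\geq 1}p^{q_n}$. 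Clearly $\alpha\in\Lpha$ with $N=k=1$. If $\alpha$ lay in $\bbC_p=\widehat{\overline{\bbQ}_p}$, I could pick $\gamma\in\overline{\bbQ}_p$ with $v(\gamma-\alpha)>2$; then $\gamma$ and $\alpha$ would share the same canonical expansion at every position $\leq 2$, so $\{q_n\}_{n\geq 1}\subseteq\opn{supp}(\gamma)$, making $\sqrt{2}$ an irrational accumulation point of $\opn{supp}(\gamma)$ and contradicting Lampert's theorem. Hence $\alpha\notin\bbC_p$, and in particular $\overline{\bbQ}_p\subsetneq\Lpha$.

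For part (2) let $q_1<q_2<\cdots$ enumerate the primes other than $p$. To show $\Lpha$ is not complete I would consider the Cauchy sequence $f_M=\sum_{n=1}^{M}p^{n+1/q_n}$: each $f_M$ lies in $\Lpha$ since its support is contained in $\frac{1}{q_1\cdots q_M}\bbZ$, while $v(f_{M+1}-f_M)=M+1+1/q_{M+1}\to\infty$. Its $\bbL_p$-limit $f=\sum_{n\geq 1}p^{n+1/q_n}$ carries a digit of denominator $q_n$ for every $n$, so no single $N$ can clear all denominators and $f\notin\Lpha$. To show that the completion is strictly smaller than $\bbL_p$, I would take $f'=\sum_{n\geq 1}p^{(q_n-1)/q_n}\in\bbL_p$ (whose support is well-ordered with supremum $1$). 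Any $g\in\Lpha$ satisfying $v(f'-g)>1$ would have to reproduce every digit of $f'$ at positions below $1$, forcing $\{(q_n-1)/q_n\}_{n\geq 1}\subseteq\opn{supp}(g)\subseteq\frac{1}{N}\bbZ[1/p]$ for a single $N$, which is impossible because the $q_n$ are infinitely many distinct primes coprime to $p$. Hence no element of $\Lpha$ sits within $p$-adic distance $p^{-1}$ of $f'$, so $f'\notin\widehat{\Lpha}$.

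The main obstacle, in my view, is identifying a concrete obstruction separating $\Lpha$ from $\bbC_p$. The decisive ingredient is Lampert's theorem on rational accumulation points of algebraic elements: placing an irrational accumulation point like $\sqrt{2}$ into the support of a hyper-algebraic element immediately forbids any $p$-adic approximation by $\overline{\bbQ}_p$. Once this trick is isolated, the remaining three assertions reduce to careful bookkeeping with well-ordered supports and denominators of rational exponents.
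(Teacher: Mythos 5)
Your proposal is correct, and the core strategy is the same as the paper's: use Lampert's theorem on rational accumulation points to exhibit a hyper-algebraic element that cannot be approximated by $\overline{\bbQ}_p$, and use unbounded denominators in the support to exhibit Cauchy sequences in $\Lpha$ whose $\bbL_p$-limits are not hyper-algebraic (and an element of $\bbL_p$ that no hyper-algebraic element can approach within valuation $1$). The one genuine variation is in how you show $\bbC_p\not\subseteq\Lpha$: you build a new witness $\beta=\sum_{n\geq 0}[a_n]p^n\in W(\overline{\bbF}_p)\subseteq\bbC_p$ that fails the \emph{coefficient} condition (since the $a_n$ generate all of $\overline{\bbF}_p$), whereas the paper gets this for free from the same element $\sum_{k\geq 1}p^{k-1/k}$ used for non-completeness, which fails the \emph{denominator} condition and lies in $\bbC_p$ as the limit of a Cauchy sequence of algebraic numbers. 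Both arguments are valid; the paper's is more economical (one example settles two claims), while yours illustrates that the two defining conditions of hyper-algebraicity give independent obstructions to $\bbC_p\subseteq\Lpha$, which has some pedagogical value.
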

\begin{proof}
	Consider the following element of $\Lpha$:
	$$\alpha=\sum_{k=1}^{\infty}p^{\frac{\lfloor\sqrt{2}\cdot p^k\rfloor}{p^k}}.$$
	If $\alpha\in\bbC_p$, then there exists a $p$-adic algebraic number $\beta\in\overline{\bbQ}_p$ that $v_p(\alpha-\beta)>2$. This shows that the canonical expansion of $\beta$ in $\Lpha$ has the form
	$$\beta=\sum_{k=1}^{\infty}p^{\frac{\lfloor\sqrt{2}\cdot p^k\rfloor}{p^k}}+\text{ terms with exponent greater than $2$}.$$
	Thus, $\Supp(\beta)$ has accumulation value $\sqrt{2}$. However, this is impossible:	Lampert showed in \cite[Theorem 2]{lampertAlgebraicPadicExpansions1986} that the set
	$$\calA\coloneqq\left\{\alpha\in\bbL_p\middle\vert \{\text{accumulation value of }\Supp(\alpha)\}\subset \bbQ\right\}$$
	is an algebraically closed field. Since the support of every $p$-adic rational number lies in $\bbZ\subset\bbQ$, $\overline{\bbQ}_p$ is a subfield of $\calA$. On the other hand, $\beta$ does not belong to $\calA$. This contradiction shows that $\Lpha$ is not contained in $\bbC_p$. In particular, $\Lpha$ contains $\overline{\bbQ}_p$ as a proper subfield.

	To show that $\Lpha$ is not complete and does not contain $\bbC_p$, we can consider
	the sequence $\left(\sum_{k=1}^n p^{k-1/k}\right)_{n\geq 1}$ in $\overline{\bbQ}_p\subseteq \Lpha$, which clearly converges in $\bbC_p$ but has non-hyper-algebraic limit $\sum_{k=1}^\infty p^{k-1/k}$ in $\bbL_p$: the $p$-power-free part of the denominators of elements of its support is unbounded.%

	To prove $\Lpha$ is not dense in $\bbL_p$, we consider the element $\gamma=\sum_{k=1}^\infty p^{-\frac{1}{p^k-1}}$ in $\bbL_p$. If it lies in the completion of $\Lpha$, then there exists an element $\delta\in \Lpha$ that $v_p(\gamma-\delta)>1$. This leads to a contradiction if we consider the canonical expansion of $\delta$ in $\Lpha$
	$$\delta=\sum_{k=1}^\infty p^{-\frac{1}{p^k-1}}+\text{ terms with exponent greater than 1}.$$
	The $p$-power-free part of the denominators of elements of $\Supp(\delta)$ are unbounded, suggesting that $\delta$ is not hyper-algebraic.
\end{proof}

\subsection{Hyper-tame index and hyper-inertia index}
\begin{definition}
	Let $\theta=\sum_{q\in \bbQ}[r_q]p^q\in\Lpha$ be a hyper-algebraic element in $\bbL_p$.
	\begin{enumerate}
		\item Denote by $\frakT_\theta$ the minimal positive integer $e$ such that $\opn{supp}(\theta)\subseteq \frac{1}{e}\bbZ[1/p]$. We call it the \textbf{hyper-tame index} of $\theta$.
		\item Denote by $\frakF_\theta$ the minimal positive integer $f$ such that $r_q\in\bbF_{p^f}$ for all $q\in\opn{supp}(\theta)$. We call it the \textbf{hyper-inertia index} of $\theta$.
	\end{enumerate}
	We call them the \textbf{hyper-algebraic invariants} of $\theta$.
\end{definition}

The following lemmas collect several basic properties of the hyper-tame and hyper-inertia indices:
\begin{lemma}\label[lemma]{lem:40178}
	Let $\alpha=\sum_{q\in\bbQ}[r_q]p^q$ be a hyper-algebraic element in $\bbL_p$. Then one has
	\begin{enumerate}
		\item the hyper-tame index $\frakT_\alpha$ is coprime to $p$;
		\item If the set of coefficients $\{r_q\}_{q\in\bbQ}$ is contained in a finite field $\bbF_{p^s}$, then $s$ is a multiple of $\frakF_\alpha$;
		\item If the support $\Supp(\alpha)$ is contained in the set $\frac{1}{N}\bbZ[1/p]$ for some positive integer $N$, then $N$ is a multiple of $\frakT_\alpha$;
	\end{enumerate}
\end{lemma}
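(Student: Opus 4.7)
The plan is to prove all three parts by direct minimality arguments that exploit the lattice structure behind the two defining conditions. For part (1), the central observation is that $p$ is a unit in $\bbZ[1/p]$, giving the identity $\frac{1}{pm}\bbZ[1/p] = \frac{1}{m}\bbZ[1/p]$ for any positive integer $m$. Thus if $p \mid \frakT_\alpha$, writing $\frakT_\alpha = pm$ produces a strictly smaller $m$ still satisfying the defining inclusion for the hyper-tame index, contradicting minimality of $\frakT_\alpha$; the analogous argument for $\frakF_\alpha$ proceeds via the corresponding lattice of subfields of $\overline{\bbF}_p$.

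For parts (2) and (3) the strategy is identical in both cases. The crucial ingredients are the intersection identities
\[\bbF_{p^a}\cap \bbF_{p^b} = \bbF_{p^{\gcd(a,b)}},\qquad \tfrac{1}{a}\bbZ[1/p]\cap \tfrac{1}{b}\bbZ[1/p]=\tfrac{1}{\gcd(a,b)}\bbZ[1/p].\]
For the second identity I first reduce $a$ and $b$ to their prime-to-$p$ parts using the invertibility of $p$, after which the identity follows from the standard fact $\frac{1}{a}\bbZ\cap\frac{1}{b}\bbZ=\frac{1}{\gcd(a,b)}\bbZ$ applied within $\bbZ[1/p]$. Applying the first identity to the pair $(s,\frakF_\alpha)$ shows that all coefficients of $\alpha$ lie in $\bbF_{p^{\gcd(s,\frakF_\alpha)}}$, and then the minimality characterization of $\frakF_\alpha$ forces $\gcd(s,\frakF_\alpha) = \frakF_\alpha$, that is, $\frakF_\alpha \mid s$, which is part (2). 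The same argument applied to the pair $(N,\frakT_\alpha)$ yields $\frakT_\alpha \mid N$, giving part (3).

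The main obstacle is purely bookkeeping: one must carefully track the interaction between the $p$-power factors an arbitrary $N$ may carry and the $p$-divisibility already built into the ring $\bbZ[1/p]$. This is resolved uniformly by the reduction $\frac{1}{p^k m}\bbZ[1/p] = \frac{1}{m}\bbZ[1/p]$ for any $k \geq 0$, which allows us to assume throughout that the integers involved are coprime to $p$ before invoking the lattice identities; after the conclusion is reached for the prime-to-$p$ parts, part (1) ensures it lifts back to the original integer without loss.
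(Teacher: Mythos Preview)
Your arguments for parts (2) and (3), and for the $\frakT_\alpha$ half of part (1), are correct and match the paper's. For part (3) the paper runs a hands-on Euclidean-division argument (writing $N=d\cdot\frakT_\alpha+r$ and showing every $q\in\Supp(\alpha)$ lands in $\frac{1}{r}\bbZ[1/p]$) rather than invoking the lattice identity $\frac{1}{a}\bbZ[1/p]\cap\frac{1}{b}\bbZ[1/p]=\frac{1}{\gcd(a,b)}\bbZ[1/p]$ directly, but this is the same content; your formulation is slightly cleaner.

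There is, however, a genuine gap in your treatment of $\frakF_\alpha$ in part (1). You assert that ``the analogous argument for $\frakF_\alpha$ proceeds via the corresponding lattice of subfields of $\overline{\bbF}_p$'', but there is no analogue of the identity $\frac{1}{pm}\bbZ[1/p]=\frac{1}{m}\bbZ[1/p]$ on the coefficient side: $\bbF_{p^{pm}}$ \emph{strictly} contains $\bbF_{p^m}$, so one cannot strip a factor of $p$ from $\frakF_\alpha$ by the same mechanism. In fact the assertion that $\frakF_\alpha$ is always coprime to $p$ appears to be false as stated: for instance $\alpha=[\zeta]$ with $\zeta$ a primitive $(p^p-1)$-th root of unity in $\overline{\bbF}_p$ is hyper-algebraic with $\frakF_\alpha=p$. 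The paper's own proof of part (1) treats only $\frakT_\alpha$ and is silent on $\frakF_\alpha$, so you are not missing an argument that the paper supplies; but you should not claim to have one either.
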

\begin{proof}\leavevmode
	\begin{enumerate}[wide]
		\item For any positive integer $N$, the sets $\frac{1}{pN}\bbZ[1/p]$ and $\frac{1}{N}\bbZ[1/p]$ are identical.
		\item One has
		      $$\{r_q\}_{q\in\bbQ}\subseteq \bbF_{p^{\frakF_\alpha}}\cap\bbF_{p^s}=\bbF_{p^{\gcd(\frakF_\alpha,s)}}.$$
		      The result follows from the minimality of $\frakF_\alpha$.
		\item By the first assertion, we may assume that $N$ is coprime to $p$. Suppose the contrary that $N=d\cdot\frakT_\alpha+r$ with $d\in\bbZ_{\geq 1}$ and $r\in\{1,\cdots,\frakT_\alpha-1\}$. Take $q\in\Supp(\alpha)$. Then the inclusion $q\in\frac{1}{\frakT_\alpha}\bbZ[1/p]\cap\frac{1}{N}\bbZ[1/p]$ allows us to write $$q=\frac{a_1\cdot p^{v_1}}{\frakT_\alpha}=\frac{a_2\cdot p^{v_2}}{N},$$
		      where $a_1,a_2,v_1,v_2\in\bbZ$ with $a_1,a_2$ coprime to $p$. By comparing the $p$-adic valuation, we get $v_1=v_2$. Since
		      $$a_2\cdot p^{v_2}=(d\cdot \frakT_\alpha+r)\cdot q=d\cdot a_1\cdot p^{v_1}+r\cdot q=d\cdot a_1\cdot p^{v_2}+r\cdot q,$$
		      we obtain that $q=\frac{a_2-d\cdot a_1}{r}\cdot p^{v_2}\in\frac{1}{r}\bbZ[1/p]$, which contradicts the minimality of $\frakT_\alpha$.
	\end{enumerate}
\end{proof}

\begin{lemma}\label{lem:36325}
	Let $\alpha,\beta\in\Lpha$ be two hyper-algebraic elements in $\bbL_p$. Then one has
	\begin{enumerate}
		\item $\frakT_{\alpha+\beta}\mid\opn{lcm}(\frakT_\alpha,\frakT_\beta)$, $\frakF_{\alpha+\beta}\mid \opn{lcm}(\frakF_\alpha,\frakF_\beta)$.
		\item $\frakT_{\alpha\cdot\beta}\mid\opn{lcm}(\frakT_\alpha,\frakT_\beta)$, $\frakF_{\alpha\cdot\beta}\mid \opn{lcm}(\frakF_\alpha,\frakF_\beta)$. In particular if $\alpha$ is algebraic over $\bbQ_p$ and $\bbQ_p(\alpha)$ is unramified over $\bbQ_p$, then $\frakT_{\alpha\cdot\beta}\mid \frakT_\beta$ and $\frakF_{\alpha\cdot \beta}\mid \opn{lcm}(\frakf_\alpha,\frakF_\beta)$.
		\item $\frakT_{1/\alpha}=\frakT_\alpha$, $\frakF_{1/\alpha}=\frakF_\alpha$ for $\alpha\neq 0$.
	\end{enumerate}
\end{lemma}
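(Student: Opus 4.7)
The plan is to reduce everything to the structural description of addition, multiplication, and inversion in $\bbL_p$ at the level of Teichmüller expansions, and then conclude by invoking the divisibility criterion of \Cref{lem:40178}.

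For part (1), I would write
$$\alpha+\beta=\sum_{q\in\Supp(\alpha)\cup\Supp(\beta)}\bigl([r_q]+[s_q]\bigr)p^q$$
and normalize each Witt-vector sum $[r_q]+[s_q]\in W(\overline{\bbF}_p)$ into its canonical Teichmüller form. The universal Witt addition polynomials are defined over $\bbF_p$, so the carry terms land at integer-shifted exponents $q+n$ with $n\in\bbZ_{\geq 0}$ and have coefficients that are $\bbF_p$-polynomial expressions in $r_q$ and $s_q$. Consequently $\Supp(\alpha+\beta)\subseteq\frac{1}{\opn{lcm}(\frakT_\alpha,\frakT_\beta)}\bbZ[1/p]$ and all Teichmüller coefficients of $\alpha+\beta$ lie in the compositum $\bbF_{p^{\opn{lcm}(\frakF_\alpha,\frakF_\beta)}}$, so \Cref{lem:40178} yields both divisibilities.

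For part (2), the same philosophy applies to the formal product $\alpha\beta=\sum_{q,q'}[r_q s_{q'}]p^{q+q'}$: well-orderedness of $\Supp(\alpha)$ and $\Supp(\beta)$ guarantees that for each fixed $t$ only finitely many pairs $(q,q')$ contribute with $q+q'=t$, and normalizing the resulting element of $W(\overline{\bbF}_p)$ again produces exponents in $t+\bbZ_{\geq 0}$ and coefficients in $\bbF_{p^{\opn{lcm}(\frakF_\alpha,\frakF_\beta)}}$. The ``in particular'' statement is then immediate, since an unramified $\alpha\in\overline{\bbQ}_p$ has $\Supp(\alpha)\subseteq\bbZ$ and Teichmüller coefficients in its residue field $\bbF_{p^{\frakf_\alpha}}$, so $\frakT_\alpha=1$ and $\frakF_\alpha\mid\frakf_\alpha$; substituting into the general bound gives the sharper divisibilities.

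For part (3), I would write $\alpha=[r_{q_0}]p^{q_0}(1+u)$ with $q_0=v_p(\alpha)$ and $v_p(u)>0$, and expand
$$\frac{1}{\alpha}=[r_{q_0}]^{-1}p^{-q_0}\sum_{n\geq 0}(-u)^n.$$
By (1) and (2), $\frakT_u$ and $\frakF_u$ divide $\frakT_\alpha$ and $\frakF_\alpha$ respectively, and every partial sum of the geometric series inherits these divisibilities; since $v_p(u^n)\to\infty$, only finitely many $u^n$ contribute to the Teichmüller coefficient at any fixed $p^q$, so the bounds pass to the limit. This yields $\frakT_{1/\alpha}\mid\frakT_\alpha$ and $\frakF_{1/\alpha}\mid\frakF_\alpha$; replacing $\alpha$ by $1/\alpha$ gives the reverse divisibilities, hence the equalities.

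The step I expect to require the most care is the Witt-vector carry bookkeeping in (1) and (2): namely, verifying that normalizing a finite sum in $W(\overline{\bbF}_p)$ into canonical Teichmüller form shifts exponents only by non-negative integers and keeps coefficients in the appropriate finite subfield of $\overline{\bbF}_p$. This is ultimately a formal consequence of the universal Witt polynomials being defined over $\bbF_p$, but it is the place where the mixed-characteristic structure of $\bbL_p=W(\overline{\bbF}_p)(\!(p^{\bbQ})\!)$ truly enters the argument.
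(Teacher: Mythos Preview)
Your proposal is correct and follows essentially the same route as the paper: parts (1) and (2) are reduced to the definition of addition and multiplication in $\bbL_p$ together with the fact that Witt-vector carries have integer exponent shifts and $\bbF_p$-polynomial coefficients, the unramified ``in particular'' case is handled by observing that such an $\alpha$ lies in $\Frac W(\bbF_{p^{\frakf_\alpha}})$, and part (3) is proved via the geometric-series expansion of $\alpha^{-1}$ followed by the symmetry $\alpha\leftrightarrow 1/\alpha$. Your write-up is in fact more explicit than the paper's on two points---the Witt carry bookkeeping and the symmetry step in (3)---but the underlying argument is identical.
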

\begin{proof}
	The first and the second assertions follow from the definition of addition and multiplication on $\bbL_p$. In particular if $\bbQ_p(\alpha)$ is unramified over $\bbQ_p$, then $\bbQ_p(\alpha)=\opn{Frac}W(\bbF_{p^{\frakf_\alpha}})$. As a result, every element in $\bbQ_p(\alpha)$ has the form $\sum_{k>\!\!\!>-\infty}[\zeta_k]p^k$, where $\zeta_k\in\bbF_{p^{\frakf_\alpha}}$ for all $k$. This shows that $\frakT_\alpha=1$ and $\frakF_\alpha=\frakf_\alpha$.

	For the third assertion, the result is trivial when $\vert\Supp(\alpha)\vert=1$. Now we suppose $\vert\Supp(\alpha)\vert\geq 2$ and write $\alpha=[\zeta]p^{v_p(\alpha)}-A$ for some $\zeta\in\overline{\bbF}_p$ with $v_p(A)>v_p(\alpha)$. Then $\zeta\in\bbF_{p^{\frakF_\alpha}}$, $\frakT_A\mid\frakT_\alpha$ and $\frakF_A\mid \frakF_\alpha$. The result follows from the expansion
	$$\alpha^{-1}=[\zeta^{-1}]p^{-v_p(\alpha)}\sum_{k=0}^\infty \left([\zeta^{-1}]p^{-v_p(\alpha)}\cdot A\right)^k,$$
	where $v_p\left([\zeta^{-1}]p^{-v_p(\alpha)}\cdot A\right)>0,\ \frakT_{[\zeta^{-1}]p^{-v_p(\alpha)}\cdot A}\mid\frakT_\alpha \text{ and }\frakF_{[\zeta^{-1}]p^{-v_p(\alpha)}\cdot A}\mid \frakF_\alpha$.
\end{proof}

\begin{corollary}\label[corollary]{coro:9860}
	For any positive integer $e,f\geq 1$, the set
	$$\Lpha(e,f)\coloneqq \{\alpha\in\Lpha\colon \frakF_\alpha\mid f,\ \frakT_\alpha\mid e\}$$
	is a subfield of $\Lpha$. In particular, for any $\alpha\in\Lpha$, we have $\bbQ_p(\alpha)\subset\Lpha(\frakT_\alpha,\frakF_\alpha)$.
\end{corollary}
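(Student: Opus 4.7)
The plan is to verify the subfield axioms for $\Lpha(e,f)$ directly from the divisibility estimates packaged in \Cref{lem:36325}. First I would observe that $\bbQ_p \subseteq \Lpha(e,f)$ for every $e,f \geq 1$: any element of $\bbQ_p$ has canonical expansion with support contained in $\bbZ \subseteq \frac{1}{e}\bbZ[1/p]$ and with coefficients in $\bbF_p \subseteq \bbF_{p^f}$. This secures $0, 1 \in \Lpha(e,f)$ and, via $-\beta = (-1) \cdot \beta$ with $-1 \in \bbQ_p$, will also give us additive inverses once multiplicative closure is established.

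For closure under the binary operations, suppose $\alpha,\beta \in \Lpha(e,f)$. Then $\opn{lcm}(\frakT_\alpha,\frakT_\beta) \mid e$ and $\opn{lcm}(\frakF_\alpha,\frakF_\beta) \mid f$, so parts (1) and (2) of \Cref{lem:36325} force $\frakT_{\alpha+\beta}, \frakT_{\alpha\beta} \mid e$ and $\frakF_{\alpha+\beta}, \frakF_{\alpha\beta} \mid f$, i.e.\ both $\alpha+\beta$ and $\alpha\beta$ belong to $\Lpha(e,f)$. For multiplicative inverses of a nonzero $\alpha \in \Lpha(e,f)$, part (3) of the same lemma provides the equalities $\frakT_{1/\alpha} = \frakT_\alpha$ and $\frakF_{1/\alpha} = \frakF_\alpha$, so $1/\alpha$ stays in $\Lpha(e,f)$.

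The ``in particular'' statement then follows immediately by specializing to $e = \frakT_\alpha$ and $f = \frakF_\alpha$: the subfield $\Lpha(\frakT_\alpha,\frakF_\alpha)$ contains both $\bbQ_p$ and $\alpha$, hence contains the compositum $\bbQ_p(\alpha)$. I do not expect a genuine obstacle: the entire content of this corollary is already encoded in \Cref{lem:36325}, and the only bookkeeping is translating ``$\frakT_\alpha \mid e$ and $\frakT_\beta \mid e$'' into ``$\opn{lcm}(\frakT_\alpha,\frakT_\beta) \mid e$'' (and likewise for $\frakF$).
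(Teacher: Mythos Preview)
Your proposal is correct and matches the paper's approach exactly: the corollary is stated without proof precisely because it follows immediately from \Cref{lem:36325} via the routine subfield verification you outline.
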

\section{$p$-adic algebraic numbers in $\Lpha$}
The objective of this section is to investigate the hyper-algebraic invariants of $p$-adic algebraic numbers.%
\subsection{ Hyper-algebraic invariants of general $p$-adic algebraic numbers}

As observed in \cite[Corollary 8]{Poonen1993}, there are two special types of automorphisms in $\opn{Aut}_{\bbQ_p}(\bbL_p)$:
\begin{enumerate}
	\item for any $g\in\calG_{\bbF_p}\coloneqq\opn{Gal}\left(\overline{\bbF}_p/\bbF_p\right)$, it can be viewed as an element of $\opn{Aut}_{\bbQ_p}(\bbL_p)$ by the formula
	      $$g\cdot\sum_{q\in\bbQ}[r_q]p^q=\sum_{q\in\bbQ}[g(r_q)]p^q,\ \text{for any }\sum_{q\in\bbQ}[r_q]p^q\in\bbL_p.$$
	Since $\bbQ_p\subsetneq\bbL_p$ is fixed by this action of $\calG_{\bbF_p}$, it indeed gives rise to a group homomorphism $\calG_{\bbF_p}\lto\calG_{\bbQ_p}$, which is a section of the natural morphism $\calG_{\bbQ_p}\lto\calG_{\bbF_p}$.

	\item For any group homomorphism $\xi\colon\bbQ/\bbZ\lto \overline{\bbF}_p^\times$, the following formula
	      $$\lambda_\xi\colon \sum_{q\in\bbQ}[r_q]p^q\longmapsto \sum_{q\in\bbQ}[\xi(q)r_q]p^q,\ \text{for any }\sum_{q\in\bbQ}[r_q]p^q\in\bbL_p$$
	      also gives an element of $\opn{Aut}_{\bbQ_p}(\bbL_p)$.
\end{enumerate}
These two types of automorphisms can be used to show that the hyper-algebraic invariants of $p$-adic algebraic numbers are bounded from above by their degrees over $\bbQ_p$. 

We first prove this assertion for the hyper-inertia index.
\begin{proposition}\label[proposition]{prop:47781}
	For every $p$-adic algebraic number $\alpha$, one has $\frakF_\alpha\leq[\bbQ_p(\alpha)\colon\bbQ_p]$.
\end{proposition}
\begin{proof}

	As we have mentioned, the action of $\calG_{{\bbF}_p}$ on $\bbL_p$ sends $p$-adic algebraic numbers to their conjugates under the action of $\calG_{\bbQ_p}$. As a result, one has
	\begin{equation}\label{eq:47781}
		\left\lvert\{g(\alpha)\colon g\in\calG_{\bbF_p}\}\right\rvert\leq [\bbQ_p(\alpha)\colon\bbQ_p]
	\end{equation}
	for any $\alpha\in\overline{\bbQ}_p$.

	Suppose there exists $\alpha\in\overline{\bbQ}_p$ that $\frakF_\alpha>[\bbQ_p(\alpha)\colon\bbQ_p]$. Thus, there exists a rational number $q_0\in\Supp(\alpha)$ such that the minimal finite field containing $C_{q_0}(\alpha)$ is $\bbF_{p^r}$ with $r>[\bbQ_p(\alpha)\colon\bbQ_p]$. Consider the set
	$$\{g(C_{q_0}(\alpha))\colon g\in\calG_{\bbF_p}\}=\left\{C_{q_0}(\alpha),C_{q_0}(\alpha)^p,\cdots,C_{q_0}(\alpha)^{p^n},\cdots\right\}.$$
	The cardinality of this set is the minimal positive integer $d$ that $C_{q_0}(\alpha)=C_{q_0}(\alpha)^{p^d}$, which is the same as the minimal positive integer $d$ that $C_{q_0}(\alpha)\in\bbF_{p^d}$. This shows that $r=d$, i.e. $\left\lvert\{g(C_{q_0}(\alpha))\colon g\in\calG_{\bbF_p}\}\right\rvert=r$. Since the following map is surjective
	$$\{g(\alpha)\colon g\in\calG_{\bbF_p}\}\lto \{g(C_{q_0}(\alpha))\colon g\in\calG_{\bbF_p}\},\ g(\alpha)\longmapsto C_{q_0}(g(\alpha))=g(C_{q_0}(\alpha)),$$
	we know that $\left\lvert\{g(\alpha)\colon g\in\calG_{\bbF_p}\}\right\rvert\geq r$, which contradicts to \eqref{eq:47781}.
\end{proof}
In the rest of this subsection, we are going to show that the hyper-tame index of a $p$-adic algebraic number is also bounded from above by its degree over $\bbQ_p$. To this end, we need to introduce the notion of admissible maps. Denote by $\mathbf{Set}$ (resp. $\mathbf{Ab}$) the category of sets (resp. abelian groups).
\begin{definition}%
	Let $M$ be a subset of $\bbQ/\bbZ$. A map $f\colon M\lto \overline{\bbF}_p^\times$ is called admissible, if it can be extended to a (non necessarily unique) group homomorphism $\widetilde{f}\in\opn{Hom}_{\mathbf{Ab}}\left(\bbQ/\bbZ,\overline{\bbF}_p^\times\right)$.
\end{definition}
For any $\alpha\in \bbL_p$, we denote by $\opn{Hom}_{\mathbf{Set}}^{\opn{adm}}\left(\Supp(\alpha)/\bbZ,\overline{\bbF}_p^\times\right)$ the set of all admissible maps from $\Supp(\alpha)/\bbZ$ to $\overline{\bbF}_p^{\times}$.
If $\alpha=\sum_{q\in\bbQ}[r_q]p^q\in\overline{\bbQ}_p$ and $f\in \opn{Hom}_{\mathbf{Set}}^{\opn{adm}}\left(\Supp(\alpha)/\bbZ,\overline{\bbF}_p^\times\right)$, then we have
$$\sum_{q\in\bbQ}[f(q)r_q]p^q=\sum_{q\in\bbQ}[\widetilde{f}(q)r_q]p^q \in \overline{\bbQ}_p$$
for any extension $\widetilde{f}\in\opn{Hom}_{\mathbf{Ab}}\left(\bbQ/\bbZ,\overline{\bbF}_p^\times\right)$. Note that for any group homomorphism $\xi\colon\bbQ/\bbZ\lto \overline{\bbF}_p^\times$, $\lambda_\xi$ maps $p$-adic algebraic numbers to their conjugates under the action of $\calG_{\bbQ_p}\coloneqq \opn{Gal}\left(\overline{\bbQ}_p/\bbQ_p\right)$. This gives us an injective map:
$$\Phi_\alpha\colon\opn{Hom}^{\opn{adm}}_{\mathbf{Set}}\left(\Supp(\alpha)/\bbZ,\overline{\bbF}_p^\times\right)\lto\{g(\alpha)\colon g\in\calG_{\bbQ_p}\},\ f\longmapsto \lambda_{\widetilde{f}}(\alpha).$$

\begin{lemma}\label{lem:61266}
	Let $A$ be a subset of $\bbQ$ and let $\langle A/\bbZ\rangle$ be the subgroup of $\bbQ/\bbZ$ generated by $A/\bbZ$. Then
	\begin{lemenum}
		\item\label{it:57329} One has a bijection:
		$$\opn{Hom}_{\mathbf{Ab}}\left(\langle A/\bbZ\rangle,\overline{\bbF}_p^\times\right)\lto\opn{Hom}^{\opn{adm}}_{\mathbf{Set}}\left(A/\bbZ,\overline{\bbF}_p^\times\right).$$
		\item\label{it:41238} If $\opn{Hom}^{\opn{adm}}_{\mathbf{Set}}\left(A/\bbZ,\overline{\bbF}_p^\times\right)$ is a finite set, then
		$A\subseteq \frac{1}{N}\bbZ[1/p]$,
		where $N=\left\lvert\opn{Hom}_{\mathbf{Ab}}\left(\langle A/\bbZ\rangle,\overline{\bbF}_p^\times\right)\right\rvert$.
	\end{lemenum}
\end{lemma}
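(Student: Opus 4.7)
The plan is to use restriction along $A/\bbZ \hookrightarrow \langle A/\bbZ\rangle$ for part (1). The key input is that $\overline{\bbF}_p^\times$, being the union of all groups $\mu_n$ with $\gcd(n,p)=1$, is a divisible (equivalently, injective) abelian group. Hence for any group homomorphism $\varphi\colon \langle A/\bbZ\rangle \lto \overline{\bbF}_p^\times$, we may first extend $\varphi$ to $\widetilde{\varphi}\colon \bbQ/\bbZ \lto \overline{\bbF}_p^\times$, which witnesses that the restriction $\varphi|_{A/\bbZ}$ is admissible; so the restriction map is well-defined into $\opn{Hom}^{\opn{adm}}_{\opn{Set}}(A/\bbZ, \overline{\bbF}_p^\times)$. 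Surjectivity is immediate from the definition of admissibility (restrict an extension $\widetilde{f}\colon \bbQ/\bbZ \lto \overline{\bbF}_p^\times$ of $f$ to $\langle A/\bbZ\rangle$), and injectivity is automatic because two group homomorphisms out of $\langle A/\bbZ\rangle$ that agree on the generating set $A/\bbZ$ must coincide.

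For part (2), set $G \coloneqq \langle A/\bbZ \rangle \subseteq \bbQ/\bbZ$ and use the primary decomposition $G = \bigoplus_\ell G_\ell$, with $G_\ell$ the $\ell$-primary component, a subgroup of $\bbZ[1/\ell]/\bbZ$. Since $\overline{\bbF}_p^\times$ has trivial $p$-torsion, one has
\[
\opn{Hom}_{\opn{Ab}}(G, \overline{\bbF}_p^\times) \cong \prod_{\ell \neq p} \opn{Hom}_{\opn{Ab}}(G_\ell, \overline{\bbF}_p^\times).
\]
Finiteness of this product forces all but finitely many $G_\ell$ to vanish and each surviving $G_\ell$ to be a proper, hence finite cyclic, subgroup of $\bbZ[1/\ell]/\bbZ$: if $G_\ell$ were the whole $\ell$-primary group $\bbZ[1/\ell]/\bbZ$, then its dual would contain $\varprojlim_n \bbZ/\ell^n\bbZ = \bbZ_\ell$, which is infinite. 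Therefore $G^{(p)} \coloneqq \bigoplus_{\ell \neq p} G_\ell$ is a finite abelian group of order coprime to $p$, and Pontryagin duality against $\overline{\bbF}_p^\times$ (which contains every $n$-th root of unity with $\gcd(n,p)=1$) yields $|G^{(p)}| = N$.

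Finally, each $g \in G$ decomposes as $g = g_p + g^{(p)}$, with $g_p \in G_p$ of $p$-power order $p^k$ and $g^{(p)} \in G^{(p)}$ of order dividing $N$; writing $g_p = b/p^k + \bbZ$ and $g^{(p)} = a/N + \bbZ$ shows that any lift $q \in \bbQ$ of $g$ lies in $\frac{1}{Np^k}\bbZ \subseteq \frac{1}{N}\bbZ[1/p]$. Applying this to $q \in A$ yields the required inclusion $A \subseteq \frac{1}{N}\bbZ[1/p]$. The main obstacle is the structural step that rules out a full $\ell$-primary summand $\bbZ[1/\ell]/\bbZ$ inside $G^{(p)}$ under the finiteness hypothesis; once this is in hand, the rest is routine bookkeeping of the primary decomposition.
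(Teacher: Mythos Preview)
Your proof is correct. Part (1) is essentially the same as the paper's argument: restriction to $A/\bbZ$ is injective because $A/\bbZ$ generates, and both directions of the image identification use that $\overline{\bbF}_p^\times$ is injective in $\opn{Ab}$.

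For part (2), however, you take a genuinely different route. The paper argues by contradiction: assuming some $q\in A$ has prime-to-$p$ denominator $v\nmid N$, it builds a homomorphism $\langle A/\bbZ\rangle\to\overline{\bbF}_p^\times$ of order divisible by $v$ (by sending $u/v+\bbZ$ to a primitive $v$-th root of unity and extending via injectivity), then invokes Lagrange's theorem to contradict $v\nmid N$. Your argument is structural: you decompose $G=\langle A/\bbZ\rangle$ into its primary components, observe that finiteness of the dual forces the prime-to-$p$ part $G^{(p)}$ to be finite (ruling out full Pr\"ufer summands since $\opn{End}(\bbZ[1/\ell]/\bbZ)\cong\bbZ_\ell$), and then use Pontryagin duality to get $|G^{(p)}|=N$ exactly. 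Your approach yields the sharper conclusion that $N$ is precisely the order of the prime-to-$p$ part of $\langle A/\bbZ\rangle$, not just a multiple of each prime-to-$p$ denominator; the paper's argument is lighter on machinery (only Lagrange and injectivity of $\overline{\bbF}_p^\times$) and does not need to analyse the structure of subgroups of Pr\"ufer groups.
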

\begin{proof}\leavevmode
	\begin{enumerate}[wide]
		\item By restricting the morphisms in $\opn{Hom}_{\mathbf{Ab}}\left(\langle A/\bbZ\rangle,\overline{\bbF}_p^\times\right)$ to $A/\bbZ$, we obtain an injection
		      $$\iota\colon\opn{Hom}_{\mathbf{Ab}}\left(\langle A/\bbZ\rangle,\overline{\bbF}_p^\times\right)\lto\opn{Hom}_{\mathbf{Set}}\left(A/\bbZ,\overline{\bbF}_p^\times\right).$$
		      We are left to show that the image of this map is exactly $\opn{Hom}^{\opn{adm}}_{\mathbf{Set}}\left(A/\bbZ,\overline{\bbF}_p^\times\right)$.

		      For any $f\in \opn{Hom}^{\opn{adm}}_{\mathbf{Set}}\left(A/\bbZ,\overline{\bbF}_p^\times\right)$, any extension $\widetilde{f}\in\opn{Hom}_{\mathbf{Ab}}\left(\bbQ/\bbZ,\overline{\bbF}_p^\times\right)$ of $f$ has image $f$ by the injection $\iota$. This implies that $\opn{Hom}^{\opn{adm}}_{\mathbf{Set}}\left(A/\bbZ,\overline{\bbF}_p^\times\right)$ is contained in the image of $\iota$.

		      For any $h=\iota(a)\in \opn{Hom}_{\mathbf{Set}}\left(A/\bbZ,\overline{\bbF}_p^\times\right)$ with some $a\in \opn{Hom}_{\mathbf{Ab}}\left(\langle A/\bbZ\rangle,\overline{\bbF}_p^\times\right)$, $a$ extends uniquely to a group homomorphism $\widetilde{a}\in \opn{Hom}_{\mathbf{Ab}}\left(\bbQ/\bbZ,\overline{\bbF}_p^\times\right)$. This is because $\overline{\bbF}_p^\times$ is divisible, and consequently injective in $\mathbf{Ab}$. Since $\widetilde{a}\mid_{A/\bbZ}=a\mid_{A/\bbZ}=\iota(a)=h$, we know that $h$ is admissible.
		\item The following proof is given by Lahtonen (cf. \cite{4891731}). Let $N=\vert \opn{Hom}_{\mathbf{Ab}}\left(\langle A/\bbZ\rangle,\overline{\bbF}_p^\times\right)\vert$. Suppose there exists a rational number $q\in\bbQ$ that $q+\bbZ\in\langle A/\bbZ\rangle$ and $q\notin \frac{1}{N}\bbZ[1/p]$. We write $q=\frac{u}{p^r\cdot v}$, where $u,v\in\bbZ_{\geq 1}$, $r\in\bbZ_{\geq 0}$ with $\gcd(u,v)=\gcd(p,v)=1$. Since $q\notin \frac{1}{N}\bbZ[1/p]$, one knows that $v$ does not divide $N$.

		      Notice that the element $z^\prime\coloneqq \frac{u}{v}+\bbZ$ has order $v$ in $\langle A/\bbZ\rangle\subseteq \bbQ/\bbZ$. Fix a $v$-th primitive root $\zeta_v$ of unity in $\overline{\bbF}_p^\times$, then the map $z^\prime\longmapsto \zeta_v$ induces a morphism $d$ in $\opn{Hom}_{\mathbf{Ab}}\left(\langle z^\prime\rangle,\overline{\bbF}_p^\times\right)$ with order $v$. Since $\overline{\bbF}_p^\times$ is injective in $\mathbf{Ab}$, $d$ extends to a morphism $\widetilde{d}\in\opn{Hom}_{\mathbf{Ab}}\left(\langle A/\bbZ\rangle,\overline{\bbF}_p^\times\right)$. The order of $\widetilde{d}$ in $\opn{Hom}_{\mathbf{Ab}}\left(\langle A/\bbZ\rangle,\overline{\bbF}_p^\times\right)$, which divides $N$ by Lagrange's theorem, is a multiple of $v$. This contradicts the assertion that $v$ does not divide $N$. Thus, $\langle A/\bbZ\rangle\subset \frac{1}{N}\bbZ[1/p]/\bbZ$, which allows us to conclude the proof.
	\end{enumerate}

\end{proof}
\begin{proposition}\label[proposition]{prop:57726}
	For every $p$-adic algebraic number $\alpha$, one has $\frakT_\alpha\leq[\bbQ_p(\alpha)\colon\bbQ_p]$.
\end{proposition}
\begin{proof}

	We can set $A$ in \Cref{it:41238} to be $\Supp(\alpha)$, and we obtain $\Supp(\alpha)\subseteq\frac{1}{N}\bbZ[1/p]$,
	where
	$$N=\left\lvert\opn{Hom}_{\mathbf{Ab}}\left(\langle \Supp(\alpha)/\bbZ\rangle,\overline{\bbF}_p^\times\right)\right\rvert.$$
	By \Cref{it:57329}, we have $N=\left\lvert\opn{Hom}^{\opn{adm}}_{\mathbf{Set}}\left(\Supp(\alpha)/\bbZ,\overline{\bbF}_p^\times\right)\right\rvert$.
	Thus, $\frakT_\alpha\leq N\leq [\bbQ_p(\alpha)\colon\bbQ_p]$, as promised.
\end{proof}

Unlike the usual inertia degree and tamely ramification index, $\frakT_\alpha$ (resp. $\frakF_\alpha$) does not always divide $[\bbQ_p(\alpha)\colon\bbQ_p]$ for a general $p$-adic algebraic number $\alpha$. Moreover, these hyper-algebraic invariants are not stable under the action of $\calG_{\mathbb{Q}_p}$. This is illustrated by the following example:
\begin{example}\label[example]{eg:32730}
	Let $\alpha_1=p^{1/p}$ and $\alpha_2=p^{1/p}\cdot \zeta_p$, where $\zeta_p\in\overline{\bbQ}_p$ is a $p$-th primitive root of unity. Then $\alpha_1$ and $\alpha_2$ are conjugate under the action of $\calG_{\bbQ_p}$, but $\frakT_{\alpha_1}=\frakF_{\alpha_1}=1$, while $\frakT_{\alpha_2}=p-1$, $\frakF_{\alpha_2}=2$. In particular, neither $\frakT_{\alpha_2}$ nor $\frakF_{\alpha_2}$ divides $[\bbQ_p(\alpha_2)\colon\bbQ_p]=p$.

	In fact, the only non-trivial assertion in this example is the calculation of the hyper-algebraic invariants of $\alpha_2$. By \Cref{eg:29565}, $\zeta_p$ can be expanded as
	$$\zeta_p=\sum_{k=0}^\infty[c_k]\cdot p^{\frac{k}{p-1}},$$
	with $c_k\in\bbF_{p^2}$ for all $k\in\bbN$ and $c_1\notin\bbF_p$. As a result, $\alpha_2$ can be expanded as
	$$\alpha_2=\sum_{k=0}^\infty[c_k]\cdot p^{\frac{k}{p-1}+\frac{1}{p}},$$
	and the result follows immediately.
\end{example}

\subsection{Hyper-algebraic invariants of abelian extensions}
Let $\zeta_{p^n}$ be the $p^n$-th root of unity in \Cref{eg:29565}. It is easy to see that
\begin{table}[H]
	\begin{tabular}{ccc}
		\toprule
		                & $\alpha=\zeta_p$ & $\alpha=\zeta_{p^n}\ (n\geq 2)$ \\
		\midrule
		$\frakF_\alpha$ & $2$              & $\geq 2$                        \\ 
		$\frakT_\alpha$ & $p-1$            & $\geq p-1$                      \\
		\bottomrule
	\end{tabular} .
\end{table}
The following proposition gives a precise form of the above observations:
\begin{proposition}\label[proposition]{prop:14697}
	For any integer $n\geq 1$ and any $p^n$-th primitive root of unity $\zeta_{p^n}$, we have $\frakT_{\zeta_{p^n}}=p-1$ and
	$$\frakF_{\zeta_{p^n}}\begin{cases}\begin{alignedat}{3}
				 & \omit\hfill $=$ \hfill &  & 2,              &  & \text{ if }n=1,2;   \\
				 & \text{divides }        &  & 2\cdot p^{n-2}, &  & \text{ if }n\geq 3.
			\end{alignedat}\end{cases}$$
\end{proposition}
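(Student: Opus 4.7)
The plan is to derive the invariants of $\zeta_{p^n}$ directly from the partial canonical expansion of \Cref{eg:29565}, combined with the multiplicativity in \Cref{lem:36325} applied to the cyclotomic tower.

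For the \emph{lower bounds}, I would observe that in \Cref{eg:29565} the summand at $k=1$ of the first explicit sum contributes $(-1)^{n}\zeta_{2(p-1)}\cdot p^{1/(p^{n-1}(p-1))}$, and a short case-check against the remaining written terms rules out cancellation at the exponent $q_{0}=1/(p^{n-1}(p-1))$. Thus $q_{0}\in\Supp(\zeta_{p^n})$ with coefficient in $\bbF_{p^{2}}\setminus\bbF_{p}$, and since the $p$-free denominator of $q_{0}$ is $p-1$, \Cref{lem:40178} forces both $(p-1)\mid\frakT_{\zeta_{p^n}}$ and $2\mid\frakF_{\zeta_{p^n}}$.

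For the \emph{upper bounds}, I would treat $n\leq 2$ and $n\geq 3$ separately. The case $n=1$ follows from direct inspection of \Cref{eg:29565}: the whole expansion has coefficients in $\bbF_{p^{2}}$ and exponents in $\tfrac{1}{p-1}\bbZ[1/p]$, so $\frakT_{\zeta_p}\mid p-1$ and $\frakF_{\zeta_p}\mid 2$; the case $n=2$ follows analogously with \Cref{lem:2580} completing the argument. For $n\geq 3$, set $\omega:=\zeta_{p^n}^{p^{n-2}}$, which is a primitive $p^{2}$-th root of unity and therefore lies in $\Lpha(p-1,2)$ by the $n=2$ case. Viewing $\zeta_{p^n}$ as a $p^{n-2}$-th root of $\omega$, I would apply the transfinite Newton algorithm to $T^{p^{n-2}}-\omega\in\Lpha(p-1,2)[T]$ and argue by induction on ordinal stages that every new slope produced lies in $\tfrac{1}{p-1}\bbZ[1/p]$ (since slopes come from valuations of elements of $\Lpha(p-1,\cdot)$) and that every residue polynomial factors over $\bbF_{p^{2p^{n-2}}}$ (since the polynomial has degree $p^{n-2}$, which bounds the total residue-field growth). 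By \Cref{coro:9860}, $\Lpha(p-1,2p^{n-2})$ is a subfield of $\Lpha$ stable under such Newton iteration, so $\zeta_{p^n}\in\Lpha(p-1,2p^{n-2})$, yielding $\frakT_{\zeta_{p^n}}\mid p-1$ and $\frakF_{\zeta_{p^n}}\mid 2p^{n-2}$.

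The principal difficulty is the transfinite Newton bookkeeping: one must verify that the invariant ``slopes in $\tfrac{1}{p-1}\bbZ[1/p]$ and residue polynomials factoring over $\bbF_{p^{2p^{n-2}}}$'' propagates past every ordinal stage, including limit stages where the inductive hypothesis must be preserved under well-ordered summation. The crudeness of the residue-field bound $\bbF_{p^{2p^{n-2}}}$ is exactly what prevents sharpening $\frakF_{\zeta_{p^n}}\mid 2p^{n-2}$ to $\frakF_{\zeta_{p^n}}=2$, the content of the conjecture in the preceding remark.
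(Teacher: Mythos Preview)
Your architecture matches the paper's, but you are missing the engine that makes it run: \Cref{lem:35078}. That lemma carries out exactly the transfinite Newton bookkeeping you describe, but for $T^p-\alpha$ only, showing that the algorithm produces a $p$-th root in $\Lpha(\frakT_\alpha,p\cdot\frakF_\alpha)$ --- and even in $\Lpha(\frakT_\alpha,\frakF_\alpha)$ when that root has $C_{1/(p-1)}=0$. For $n=2$ the paper feeds \Cref{lem:2580} into this ``in particular'' clause; your ``analogously with \Cref{lem:2580}'' leaves the argument open, since \Cref{lem:2580} by itself only locates a $\zeta_{p^2}'$ with vanishing $C_{1/(p-1)}$ and says nothing about its hyper-inertia index. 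For $n\geq 3$ the paper simply iterates \Cref{lem:35078} with $\alpha=(\zeta_{p^n})^p$, picking up one factor of $p$ in the $\frakF$-bound at each step.

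Your alternative --- running Newton directly on $T^{p^{n-2}}-\omega$ --- would need a higher-degree analogue of \Cref{lem:35078}, and the justifications you offer do not supply it. \Cref{coro:9860} says only that $\Lpha(e,f)$ is a \emph{field}; it makes no claim about stability under Newton iteration or root extraction (these fields are not algebraically closed). And ``degree $p^{n-2}$ bounds the total residue-field growth'' is a heuristic, not a proof: what must be shown is that at every ordinal stage the residue polynomial of $(T+\beta_\omega)^{p^{n-2}}-\omega$ splits over $\bbF_{p^{2p^{n-2}}}$, which means controlling a Newton polygon with several breakpoints rather than the at-most-two segments arising in the degree-$p$ case. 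The paper's one-step-at-a-time iteration of \Cref{lem:35078} sidesteps that analysis entirely.
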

The key to prove this proposition is the following lemma:
\begin{lemma}\label[lemma]{lem:35078}
	Let $\alpha\in\Lpha$ with $v_p(\alpha)=0$. Then there exists a $p$-th root $\beta$ of $\alpha$ in $\Lpha(\frakT_\alpha,p\cdot\frakF_\alpha)$. In particular, if $C_{\frac{1}{p-1}}(\beta)=0$, then $\beta$ belongs to $\Lpha(\frakT_\alpha,\frakF_\alpha)$.
\end{lemma}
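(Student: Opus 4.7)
I would apply Kedlaya's transfinite Newton algorithm to $P(T)=T^p-\alpha\in\Lpha[T]$ and, by transfinite induction on the step counter $\omega$, show that every approximation $r_\omega$ lies in $\Lpha(\frakT_\alpha,p\cdot\frakF_\alpha)$; the limit is then the desired $\beta$. Set $e\coloneqq\frakT_\alpha$ and $f\coloneqq\frakF_\alpha$. For the base case, since $v_p(\alpha)=0$ the leading coefficient $c=C_0(\alpha)$ lies in $\bbF_{p^f}^{\times}$, and by perfectness of finite fields $c^{1/p}\in\bbF_{p^f}$, so the first residue polynomial $T^p-c=(T-c^{1/p})^p$ yields $r_1=[c^{1/p}]\in\Lpha(1,f)\subseteq\Lpha(e,pf)$.

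For the inductive step at $\omega\geq 1$, the polynomial $P_\omega(T)=(T+r_\omega)^p-\alpha$ has coefficients (in the paper's notation) $a_0=1$, $a_k=\binom{p}{k}r_\omega^k$ of valuation $1$ for $1\leq k\leq p-1$, and $a_p=r_\omega^p-\alpha$ of valuation $V$, all in $\Lpha(e,pf)$ by hypothesis. A direct inspection of the Newton polygon shows that its largest-slope segment takes exactly one of three forms: (i) from $(p-1,1)$ to $(p,V)$ of slope $V-1$ when $V>p/(p-1)$; (ii) from $(0,0)$ to $(p,V)$ of slope $V/p$ when $V<p/(p-1)$; or (iii) the single segment from $(0,0)$ to $(p,p/(p-1))$ of slope $1/(p-1)$ in the boundary case $V=p/(p-1)$. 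In (i) and (ii) the residue polynomial is either linear or of the form $T^p-c'$, and so has a root already in $\bbF_{p^{pf}}$ (by perfectness of finite fields). In (iii) the residue polynomial reads $T^p+c^{(p-1)/p}T+B$, and the substitution $T=c^{1/p}U$ reduces it to the Artin--Schreier equation $U^p+U=-B/c$, whose roots lie in the unique degree-$p$ extension of the current coefficient field. Moreover, the slope always lies in $\frac{1}{e}\bbZ[1/p]$: in (i) and (ii) by inheritance from $V$ (with $\bbZ[1/p]$ absorbing $1/p$), and in (iii) because the hypothesis $V=p/(p-1)\in\frac{1}{e}\bbZ[1/p]$ forces $(p-1)\mid e$, hence $1/(p-1)\in\frac{1}{e}\bbZ[1/p]$.

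The crucial observation is that case (iii) can occur \emph{at most once} throughout the algorithm, since $\delta_\omega\coloneqq v_p(\beta-r_\omega)$ is strictly increasing along $\omega$ and case (iii) is equivalent to $\delta_\omega=1/(p-1)$. Hence a single Artin--Schreier step suffices to absorb every potential enlargement of the coefficient field, and transfinite limits at limit ordinals are harmless because the hyper-algebraic conditions pass to unions of supports and coefficient sets; this gives $\beta\in\Lpha(e,pf)$. For the ``in particular'' clause, $C_{1/(p-1)}(\beta)=0$ is equivalent to $1/(p-1)\notin\Supp(\beta)$, so no $\delta_\omega$ ever takes the value $1/(p-1)$ and case (iii) is avoided entirely; every correction coefficient then stays in $\bbF_{p^f}$, yielding $\beta\in\Lpha(e,f)$. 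The point I expect to be most delicate is the Artin--Schreier reduction in case (iii): one must verify that at the first (and only) boundary step the constant $B$ lies in $\bbF_{p^f}$ rather than only in $\bbF_{p^{pf}}$, so that a single degree-$p$ enlargement to $\bbF_{p^{pf}}$ genuinely suffices and no nested tower of $p$-extensions is triggered by later Newton polygons.
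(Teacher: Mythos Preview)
Your proposal is correct and follows essentially the same approach as the paper: both run the transfinite Newton algorithm on $T^p-\alpha$, split the analysis of $\Newt{P_\omega}$ into the three regimes $k_\omega<\tfrac{1}{p-1}$, $k_\omega=\tfrac{1}{p-1}$, $k_\omega>\tfrac{1}{p-1}$, and observe that the middle case---the only one that can enlarge the coefficient field---occurs at most once because the slopes $k_\omega$ are strictly increasing. The delicate point you flag is exactly what the paper handles by organizing the induction in two phases (first proving $\beta_\omega\in\Lpha(\frakT_\alpha,\frakF_\alpha)$ for all $\omega\le\omega_0$, so that the constant term of the residue polynomial at the boundary step genuinely lies in $\bbF_{p^{\frakF_\alpha}}$, and only afterwards passing to $\Lpha(\frakT_\alpha,p\frakF_\alpha)$); one small caution is that $U^p+U=-B/c$ is not literally Artin--Schreier and need not have \emph{all} its roots in $\bbF_{p^{p\frakF_\alpha}}$, but it always has at least one there, which is all the algorithm requires.
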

\begin{proof}
	We apply the transfinite Newton algorithm on the equation $T^p-\alpha=0$ to get a root $\beta$. Set $\beta=\sum_{\omega}[c_\omega]\cdot p^{k_\omega}$, where the ordinal $\omega$ runs through the well-ordered set $\Supp(\beta)$. Recall that for any ordinal $\omega$, let $\beta_\omega=\sum_{\rho<\omega}[c_\rho]\cdot p^{k_\rho}$ and
	$$\Phi_\omega(T)=(T+\beta_\omega)^p-\alpha=T^p+\sum_{k=1}^{p-1}\binom{p}{k}\beta_\omega^k\cdot T^{p-k}+\beta_\omega^p-\alpha.$$

	The first step is easy: since $\beta_0=0$ and $\Phi_0(T)=T^p-\alpha$, the Newton polygon $\Newt{\Phi_0}$ consists of a single horizontal segment with residue polynomial given by $$\opn{Res}_{\Phi_0}(T)=T^p-C_0(\alpha)\in\bbF_{p^{\frakF_\alpha}}[T],$$
	which splits in $\bbF_{p^{\frakF_\alpha}}$. This shows that $\beta_1\in\Lpha(\frakT_\alpha,\frakF_\alpha)$ and $v_p(\beta_1)=0$.

	For any $\omega\geq 1$, since $v_p(\beta_\omega)=v_p(\beta_1)=0$, we know that $v_p(\binom{p}{k}\beta_\omega^k)=1$ for all $k=1,2,\cdots,p-1$. This implies that $\Newt{\Phi_\omega}$ is determined by the point $(p,v_p(\beta_\omega^p-\alpha))$ for every $\omega\geq 1$.

	Since $k_\omega\in\bbQ$ increases monotonically with respect to the ordinal $\omega$, we set $\omega_0$ to be the minimal ordinal $\rho$ that satisfies $k_\rho\geq \frac{1}{p-1}$.
	\begin{enumerate}
		\item Suppose $\omega<\omega_0$ and $\beta_\rho\in\Lpha(\frakT_\alpha,\frakF_\alpha)$ for every $\rho\leq\omega$. Then $\Newt{\Phi_\omega}$ consists of a single segment with slope $k_\omega=s_{\max}^{\Phi_\omega}=\frac{1}{p}v_p(\beta_\omega^p-\alpha)<\frac{1}{p-1}$.
		      \begin{figure}[H]
			      \centering
			      \begin{tikzpicture}[scale=0.8]
				      \draw[->] (0,0) -- (5.5,0) node[right] {};
				      \draw[->] (0,0) -- (0,5.5) node[above] {};

				      \draw (1,0pt) -- (1,-2pt) node[anchor=north] {$1$};
				      \draw (2,0pt) -- (2,-2pt) node[anchor=north] {$2$};
				      \draw (4,0pt) -- (4,-2pt) node[anchor=north] {$p-1$};
				      \draw (5,0pt) -- (5,-2pt) node[anchor=north] {$p$};
				      \draw (0pt,2) -- (-2pt,2) node[anchor=east] {$1$};
				      \fill[gray!70] (1,2) circle (0.05);
				      \fill[gray!70] (2,2) circle (0.05);
				      \fill[gray!70] (4,2) circle (0.05);
				      \fill (5,1.5) circle (0.05);
				      \node at (5,1.5) [above right]{$v_p(\beta_\omega^p-\alpha)$};
				      \draw[line width=0.7pt] (0,0) -- (5,1.5);
				      \draw[dotted] (0,2) -- (4,2);
				      \draw[dotted] (4,0) -- (4,2);
				      \draw[dotted] (5,1.5) -- (5,0);
			      \end{tikzpicture}
			      \caption{$\Newt{\Phi_{\omega}}$, $1\leq\omega<\omega_0$}
		      \end{figure}
		      Since $\beta_\omega^p-\alpha\in\Lpha(\frakT_\alpha,\frakF_\alpha)$ by \Cref{coro:9860}, we know that
		      \[v_p(\beta_\omega^p-\alpha)\in \Supp(\beta_\omega^p-\alpha)\subseteq \frac{1}{\frakT_\alpha}\bbZ[1/p].\] This implies that $k_\omega=\frac{1}{p}v_p(\beta_\omega^p-\alpha)$ also belongs to $\frac{1}{\frakT_\alpha}\bbZ[1/p]$.
		      The residue polynomial of $\Phi_\omega(T)$ is given by
		      $$\opn{Res}_{\Phi_\omega}(T)=T^p+C_{v_p(\beta_\omega^p-\alpha)}(\beta_\omega^p-\alpha),$$
		      where $C_{v_p(\beta_\omega^p-\alpha)}(\beta_\omega^p-\alpha)\in\bbF_{p^{\frakF_\alpha}}$. Thus, any root of this residue polynomial lies in $\bbF_{p^{\frakF_\alpha}}$. This shows that $\beta_{\omega+1}\in\Lpha(\frakT_\alpha,\frakF_\alpha)$. Since the case of limit ordinals is self-indicating, we can show by transfinite induction that  $\beta_{\omega}\in\Lpha(\frakT_\alpha,\frakF_\alpha)$ for all $\omega\leq \omega_0$.
		\item Now we deal with $\omega=\omega_0+1$.
		      \begin{enumerate}
			      \item If $k_{\omega_0}=s_{\max}^{\Phi_{\omega_0}}=\frac{1}{p-1}$, then $\Newt{\Phi_{\omega_0}}$ consists of a single segment with slope equals to
			            $$k_{\omega_0}=\frac{1}{p-1}=\frac{1}{p}v_p(\beta_{\omega_0}^p-\alpha)\in \frac{1}{\frakT_\alpha}\bbZ[1/p].$$
			            Since this segment contains the point $(p-1,1)$, one knows that
			            $$\opn{Res}_{\Phi_{\omega_0}}(T)=T^p+C_0(\beta_{\omega_0})^{p-1}T+C_{v_p(\beta_{\omega_0}^p-\alpha)}(\beta_{\omega_0}^p-\alpha)\in\bbF_{p^{\frakT_\alpha}}[T],$$
			            whose root lies in $\bbF_{p^{p\cdot \frakF_\alpha}}$. In this case, one has $\beta_{\omega_0+1}\in \Lpha(\frakT_{\alpha},p\cdot\frakF_\alpha)$.
			      \item If $k_{\omega_0}=s_{\max}^{\Phi_{\omega_0}}>\frac{1}{p-1}$, then $\Newt{\Phi_{\omega_0}}$ consists of two segments, where the vertexes of the segment with maximal slope is given by $(p-1,1)$ and $(p,v_p(\beta_{\omega_0}^p-\alpha))$. Thus,
			            $$k_{\omega_0}=\frac{v_p(\beta_{\omega_0}^p-\alpha)-1}{p-(p-1)}\in \frac{1}{\frakT_\alpha}\bbZ[1/p]$$
			            and one has
			            $$\opn{Res}_{\Phi_{\omega_0}}(T)=C_0(\beta_{\omega_0})^{p-1}T+C_{v_p(\beta_{\omega_0}^p-\alpha)}(\beta_{\omega_0}^p-\alpha),$$
			            whose root lies in $\bbF_{p^{\frakF_\alpha}}$. In this case, one has $\beta_{\omega_0+1}\in \Lpha(\frakT_{\alpha},\frakF_\alpha)$.
		      \end{enumerate}
		      \begin{figure}[H]
			      \captionsetup{justification=centering}
			      \centering
			      \begin{minipage}[t]{0.49\textwidth}
				      \centering
				      \begin{tikzpicture}[scale=0.8]
					      \draw[->] (0,0) -- (5.5,0) node[right] {};
					      \draw[->] (0,0) -- (0,5.5) node[above] {};

					      \draw (1,0pt) -- (1,-2pt) node[anchor=north] {$1$};
					      \draw (2,0pt) -- (2,-2pt) node[anchor=north] {$2$};
					      \draw (4,0pt) -- (4,-2pt) node[anchor=north] {$p-1$};
					      \draw (5,0pt) -- (5,-2pt) node[anchor=north] {$p$};
					      \draw (0pt,2) -- (-2pt,2) node[anchor=east] {$1$};
					      \draw (0pt,2.5) -- (-2pt,2.5) node[anchor=east] {$\frac{p}{p-1}$};
					      \fill[gray!70] (1,2) circle (0.05);
					      \fill[gray!70] (2,2) circle (0.05);

					      \fill (5,2.5) circle (0.05);
					      \draw[line width=0.7pt] (0,0) -- (5,2.5);
					      \draw[dotted] (0,2.5) -- (5,2.5);
					      \draw[dotted] (0,2) -- (4,2);
					      \draw[dotted] (5,0) -- (5,2.5);
					      \draw[dotted] (4,0) -- (4,2);
					      \fill[red] (4,2) circle (0.05);
				      \end{tikzpicture}
				      \caption{$\Newt{\Phi_{\omega_0}}$,\\if $k_{\omega_0}=\frac{1}{p-1}$}
			      \end{minipage}
			      \begin{minipage}[t]{0.49\textwidth}
				      \centering
				      \begin{tikzpicture}[scale=0.8]
					      \draw[->] (0,0) -- (5.5,0) node[right] {};
					      \draw[->] (0,0) -- (0,5.5) node[above] {};

					      \draw (1,0pt) -- (1,-2pt) node[anchor=north] {$1$};
					      \draw (2,0pt) -- (2,-2pt) node[anchor=north] {$2$};
					      \draw (4,0pt) -- (4,-2pt) node[anchor=north] {$p-1$};
					      \draw (5,0pt) -- (5,-2pt) node[anchor=north] {$p$};
					      \draw (0pt,2) -- (-2pt,2) node[anchor=east] {$1$};

					      \draw (0pt,5) -- (-2pt,5) node[anchor=east] {$v_p(\beta_\omega^p-\alpha)$};
					      \fill[gray!70] (1,2) circle (0.05);
					      \fill[gray!70] (2,2) circle (0.05);

					      \fill (5,5) circle (0.05);
					      \draw[line width=0.7pt] (0,0) -- (4,2) -- (5,5);

					      \draw[dotted] (0,5) -- (5,5);
					      \draw[dotted] (0,2) -- (4,2);
					      \draw[dotted] (5,0) -- (5,5);
					      \draw[dotted] (4,0) -- (4,2);
					      \fill (4,2) circle (0.05);
				      \end{tikzpicture}
				      \caption{$\Newt{\Phi_{\omega_0}}$,\\if $k_{\omega_0}>\frac{1}{p-1}$}
			      \end{minipage}
		      \end{figure}
		\item For the case of $\omega>\omega_0$, we have $k_\omega>\frac{1}{p-1}$. With the same calculation as above, one can prove by transfinite induction that for any ordinal $\omega\geq \omega_0+1$, $\beta_\omega\in\Lpha(\frakT_\alpha,\frakF_{\beta_{\omega_0+1}})$.
	\end{enumerate}
	The result follows.
\end{proof}
Additionally, we need the following auxiliary lemma:
\begin{lemma}\label[lemma]{lem:2580}
	For any $p^2$-th primitive root of unity $\zeta_{p^2}$, there exists another $p^2$-th primitive root of unity $\zeta_{p^2}^\prime$ and a $p$-th root of unity $\xi_c$ (not necessarily primitive) that $\zeta_{p^2}=\zeta_{p^2}^\prime\cdot \xi_c$ and $C_{\frac{1}{p-1}}(\zeta_{p^2}^\prime)=0$.
\end{lemma}
\begin{proof}
	Fix a $2(p-1)$-th primitive root of unity $\tilde{\zeta}_{2(p-1)}$. Let $$\calW\coloneqq\left\{\tilde{\zeta}_{2(p-1)}^{2k+1}\colon k\in\bbN_{<p-1}\right\}.$$  By choosing $\zeta_{2(p-1)}$ in the expansion of the $p^2$-th primitive root of unity given by \Cref{eg:29565} (see also \cite[Theorem 3.3]{WangYuan2021}) in $\calW$, we get $p-1$ different $p^2$-th primitive roots of unity $r_0,r_1,\cdots,r_{p-2}$, satisfying $[C_{\frac{1}{p(p-1)}}(r_k)]=\tilde{\zeta}_{2(p-1)}^{2k+1}$ and $[C_{\frac{1}{p-1}}(r_k)]=0$ for every $k\in\bbN_{<p-1}$.

	Similarly, for every $c\in \{0\}\cup\calW$, there exists a $p$-th root of unity (not necessarily primitive) $\xi_c$ that $v_p\left(\xi_c-1-c\cdot p^{\frac{1}{p-1}}\right)> \frac{1}{p-1}$. Thus, for any $k\in\bbN_{<p-1}$ and $c\in\{0\}\cup\calW$, $r_k\cdot \xi_c$ is a $p^2$-th primitive root of unity, satisfying $[C_{\frac{1}{p(p-1)}}(r_k\cdot\xi_c)]=\tilde{\zeta}_{2(p-1)}^{2k+1}$ and $[C_{\frac{1}{p-1}}(r_k\cdot\xi_c)]=c$. This enumerates all $p(p-1)$ $p^2$-th primitive roots of unity. The result follows.
\end{proof}
\begin{proof}[Proof of \Cref{prop:14697}]
	The case of $n=1$ follows immediately from \cite[Proposition 3.4]{WangYuan2021}.

	Let $\zeta_{p^2}$ be any $p^2$-th primitive root of unity. By \Cref{lem:2580}, there exists another $p^2$-th primitive root of unity $\zeta_{p^2}^\prime$ and a $p$-th root of unity $\xi_c$ (not necessarily primitive) that $\zeta_{p^2}^p=\zeta_{p^2}^\prime\cdot \xi_c$ and $C_{\frac{1}{p-1}}(\zeta_{p^2}^\prime)=0$. By applying \Cref{lem:35078}, we have
	$$\zeta_{p^2}^\prime\in\Lpha(\frakT_{(\zeta_{p^2}^\prime)^p},\frakF_{(\zeta_{p^2}^\prime)^p})=\Lpha(p-1,2).$$
	Since $\xi_c\in\Lpha(p-1,2)$, we know that $\zeta_{p^2}\in\Lpha(p-1,2)$. On the other hand, by \cite[Theorem 3.3]{WangYuan2021}, one has $\frakT_{\zeta_{p^2}}\geq p-1$ and $\frakF_{\zeta_{p^2}}\geq 2$. This implies that $\frakT_{\zeta_{p^2}}=p-1$ and $\frakF_{\zeta_{p^2}}=2$.

	When $n\geq 3$, we can set $\alpha=\left(\zeta_{p^n}\right)^p$ in \Cref{lem:35078} inductively to get the result. One should notice that when $n\geq 3$, we no longer know if the analog of \Cref{lem:2580} holds for $\zeta_{p^n}$. Thus, the hyper-inertia index is multiplied by $p$ when $n$ increases by $1$.
\end{proof}

\begin{corollary}\label[corollary]{cor:6370}
	For any positive integer $m=r\cdot p^{v_p(m)}$ with $\gcd(r,p)=1$ and any $m$-th primitive root of unity $\zeta_m$, one has
	\begin{enumerate}
		\item If $v_p(m)=0$, then $\frakT_{\zeta_m}=1$ and $\frakF_{\zeta_m}=\opn{ord}_r p$.
		\item If $v_p(m)\geq 1$, then $\frakT_{\zeta_m}\mid p-1$ and
		      $$\frakF_{\zeta_m}\mid\begin{cases}
				      \opn{lcm}(2,\opn{ord}_rp),                              & \text{ if }v_p(m)=1,2;   \\
				      \opn{lcm}\left(2\cdot p^{v_p(m)-1},\opn{ord}_rp\right), & \text{ if }v_p(m)\geq 3.
			      \end{cases}$$
	\end{enumerate}
\end{corollary}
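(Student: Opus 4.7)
The plan is to reduce to the prime-power case via the Chinese Remainder Theorem and then invoke multiplicativity of the hyper-algebraic invariants (\Cref{lem:36325}) together with \Cref{prop:14697}.

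First I would write $\zeta_m = \zeta_r \cdot \zeta_{p^{v_p(m)}}$, where $\zeta_r$ is a primitive $r$-th root of unity and $\zeta_{p^{v_p(m)}}$ is a primitive $p^{v_p(m)}$-th root of unity. This decomposition follows from Bézout: choose $a,b\in\bbZ$ with $ar+bp^{v_p(m)}=1$, and set $\zeta_r=\zeta_m^{bp^{v_p(m)}}$, $\zeta_{p^{v_p(m)}}=\zeta_m^{ar}$.

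Next I would compute the invariants of $\zeta_r$ directly. Since $\gcd(r,p)=1$, the extension $\bbQ_p(\zeta_r)/\bbQ_p$ is unramified, so $\zeta_r$ agrees with the Teichmüller lift of its reduction modulo the maximal ideal, a primitive $r$-th root of unity in $\overline{\bbF}_p$. Consequently the canonical expansion of $\zeta_r$ in $\bbL_p$ is just $[\zeta_r]\cdot p^0$. The minimal $f$ with $\zeta_r\in\bbF_{p^f}$ is exactly $\opn{ord}_r p$ (the multiplicative order of $p$ modulo $r$), so $\frakT_{\zeta_r}=1$ and $\frakF_{\zeta_r}=\opn{ord}_r p$. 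This already settles assertion (1), since in the case $v_p(m)=0$ we have $\zeta_m=\zeta_r$.

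For assertion (2), I would apply \Cref{lem:36325} to the product $\zeta_m=\zeta_r\cdot\zeta_{p^{v_p(m)}}$ to obtain
\[
\frakT_{\zeta_m}\mid\opn{lcm}(\frakT_{\zeta_r},\frakT_{\zeta_{p^{v_p(m)}}})=\opn{lcm}(1,p-1)=p-1,
\]
\[
\frakF_{\zeta_m}\mid\opn{lcm}(\frakF_{\zeta_r},\frakF_{\zeta_{p^{v_p(m)}}})=\opn{lcm}(\opn{ord}_r p,\frakF_{\zeta_{p^{v_p(m)}}}),
\]
using \Cref{prop:14697} for the bound $\frakT_{\zeta_{p^{v_p(m)}}}=p-1$. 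Finally I plug in the corresponding bound on $\frakF_{\zeta_{p^{v_p(m)}}}$ from \Cref{prop:14697}: it equals $2$ when $v_p(m)\in\{1,2\}$, and divides $2\cdot p^{v_p(m)-2}$ (hence also $2\cdot p^{v_p(m)-1}$) when $v_p(m)\geq 3$. This yields the stated divisibility relations.

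There is no substantial obstacle here, as the work has been done in \Cref{prop:14697} and \Cref{lem:36325}; the only point requiring mild care is the verification that $\frakF_{\zeta_r}=\opn{ord}_r p$, which follows at once from the characterization of $\zeta_r$ as a Teichmüller lift in the unramified extension $W(\bbF_{p^{\opn{ord}_r p}})[1/p]$ of $\bbQ_p$.
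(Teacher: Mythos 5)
Your proof is correct and follows essentially the same route as the paper's (one-line) argument: decompose $\zeta_m = \zeta_r\cdot\zeta_{p^{v_p(m)}}$, observe that $\zeta_r$ is a Teichmüller lift lying in $W(\bbF_{p^{\opn{ord}_r p}})$ so that $\frakT_{\zeta_r}=1$ and $\frakF_{\zeta_r}=\opn{ord}_r p$, then combine with \Cref{prop:14697} via the multiplicativity bounds of \Cref{lem:36325}. Your write-up merely makes explicit the steps that the paper leaves implicit.
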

\begin{proof}
	It suffices to note that any $r$-th root of unity lies in $W(\bbF_{p^{\opn{ord}_rp}})$.
\end{proof}

With the power of the local Kronecker-Weber theorem, we can generalize this result to those $p$-adic algebraic numbers that generate abelian extensions over $\bbQ_p$:
\begin{theorem}\label{thm:53765}
	Let $\alpha\in \overline{\bbQ}_p$ be a $p$-adic algebraic number with $\bbQ_p(\alpha)/
		\bbQ_p$ an abelian extension of degree $n$. Denote by $\bff_{\bbQ_p(\alpha)}$ the local conductor of $\bbQ_p(\alpha)$ over $\bbQ_p$. Then
	\begin{enumerate}
		\item If $\bff_{\bbQ_p(\alpha)}=0$, then $\frakT_\alpha=1$ and $\frakF_\alpha=n$.
		\item If $\bff_{\bbQ_p(\alpha)}\geq 1$, then $\frakT_\alpha\mid p-1$ and
		      $$\frakF_\alpha\mid\begin{cases}
				      \opn{lcm}(2,n),                                              & \text{ if }\bff_{\bbQ_p(\alpha)}=1,2;   \\
				      \opn{lcm}\left(2\cdot p^{ \bff_{\bbQ_p(\alpha)}-1},n\right), & \text{ if }\bff_{\bbQ_p(\alpha)}\geq 3.
			      \end{cases}.$$
	\end{enumerate}
\end{theorem}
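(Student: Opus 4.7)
The plan is to reduce to the cyclotomic case via the local Kronecker-Weber theorem and then combine the hyper-algebraic invariants of roots of unity computed in \Cref{cor:6370}, using the behaviour of $\frakT$ and $\frakF$ under sums and products recorded in \Cref{lem:36325}.

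For case (1), when $\bff_{\bbQ_p(\alpha)} = 0$, the extension $\bbQ_p(\alpha)/\bbQ_p$ is unramified of degree $n$, so $\bbQ_p(\alpha) = \opn{Frac} W(\bbF_{p^n})$. Every element of this field has a canonical expansion of the form $\sum_{k\gg -\infty} [c_k]\,p^k$ with $c_k \in \bbF_{p^n}$ and $k\in\bbZ$; this already yields $\frakT_\alpha = 1$ and $\frakF_\alpha \mid n$. For the reverse divisibility $n \mid \frakF_\alpha$, I would argue by contradiction: if every coefficient of $\alpha$ lay in some proper subfield $\bbF_{p^d}$ with $d\mid n$ and $d<n$, then $\alpha \in \opn{Frac} W(\bbF_{p^d})$, contradicting $[\bbQ_p(\alpha):\bbQ_p] = n$.

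For case (2), when $f := \bff_{\bbQ_p(\alpha)} \geq 1$, let $b$ denote the residue degree of $\bbQ_p(\alpha)/\bbQ_p$; in particular $b \mid n$. By local Kronecker-Weber, the maximal unramified subfield of $\bbQ_p(\alpha)$ is $\bbQ_p(\zeta_{p^b-1})$ while the totally ramified part is contained in $\bbQ_p(\zeta_{p^f})$, so
\[ \bbQ_p(\alpha) \subseteq \bbQ_p(\zeta_{p^b-1},\, \zeta_{p^f}) = \bbQ_p(\zeta_{p^f (p^b-1)}). \]
Hence $\alpha$ admits a finite expression as a $\bbQ_p$-linear combination of products $\zeta_{p^b-1}^{i}\cdot\zeta_{p^f}^{j}$. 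I would then propagate the bounds through this expression using \Cref{lem:36325}: because $\bbQ_p(\zeta_{p^b-1})/\bbQ_p$ is unramified with residue degree $b$, multiplication by a power of $\zeta_{p^b-1}$ preserves the hyper-tame index and introduces at most a factor of $b$ in the hyper-inertia index, while the contribution from $\zeta_{p^f}^{j}$ is controlled by \Cref{cor:6370}: its hyper-tame index divides $p-1$ and its hyper-inertia index divides $2$ when $f \in \{1,2\}$ and divides $2\cdot p^{f-1}$ when $f\geq 3$. Taking the lcm over the finitely many summands (via \Cref{lem:36325}(1)) produces bounds for $\frakT_\alpha$ and $\frakF_\alpha$ in terms of $b$, and since $b\mid n$ we have $\opn{lcm}(\ast, b) \mid \opn{lcm}(\ast, n)$, recovering exactly the formulas in the statement.

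The only real bookkeeping subtlety will be handling the non-primitive powers $\zeta_{p^f}^{j}$ with $v_p(j) > 0$, which are primitive $p^{f - v_p(j)}$-th roots of unity and should in principle be analyzed via \Cref{cor:6370} at the smaller exponent; however, the resulting bound is always dominated by the one for $\zeta_{p^f}$ itself, so this does not disturb the final formula. No deeper obstacle is anticipated, as the argument is essentially a controlled application of the earlier results to the Kronecker-Weber decomposition.
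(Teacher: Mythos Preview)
Your treatment of case~(1) is fine and in fact makes the equality $\frakF_\alpha=n$ more explicit than the paper does.

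The gap is in case~(2). The containment
\[
\bbQ_p(\alpha)\subseteq \bbQ_p(\zeta_{p^b-1},\zeta_{p^f})
\]
with $b$ the residue degree is \emph{false} in general: knowing that $U^{(f)}\subseteq \calN_{K/\bbQ_p}K^\times$ and that $v_p(\calN_{K/\bbQ_p}K^\times)=b\bbZ$ does not force $p^b$ itself to be a norm. For a concrete counterexample take $p=5$ and let $K/\bbQ_5$ be the abelian extension with norm group $N=\langle 10\rangle\cdot(1+5\bbZ_5)$. One checks directly that $[\bbQ_5^\times:N]=4$, that $v_5(N)=\bbZ$ (so $b=1$, $K$ is totally ramified), and that $U^{(1)}\subseteq N$ but $2\notin N$ (so $\bff_K=1$). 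Your claim would give $K\subseteq\bbQ_5(\zeta_{5^1-1},\zeta_5)=\bbQ_5(\zeta_5)$, hence $K=\bbQ_5(\zeta_5)$; but $5\notin N$ (since $5=10\cdot\tfrac12$ and $\tfrac12\equiv 3\pmod 5$), so the norm groups differ and $K\neq\bbQ_5(\zeta_5)$. Thus your intermediate bound $\frakF_\alpha\mid\opn{lcm}(2,b)$ is unjustified, and the final step ``since $b\mid n$'' is rescuing a conclusion whose proof has already broken.

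The paper avoids this by proving an effective Kronecker--Weber statement (\Cref{lem:15272}) that embeds $\bbQ_p(\alpha)$ into $\bbQ_p(\zeta_m)$ with $m=(p^{n}-1)p^{\bff_{\bbQ_p(\alpha)}}$, using the full degree $n$ rather than the residue degree $b$. This is exactly why $n$, not $b$, appears in the bound. Once that containment is in hand, one applies \Cref{cor:6370} to $\zeta_m$ (noting $\opn{ord}_{p^n-1}p=n$) and invokes \Cref{coro:9860} to conclude $\alpha\in\Lpha(\frakT_{\zeta_m},\frakF_{\zeta_m})$; your manipulation with \Cref{lem:36325} is then unnecessary, though not wrong.
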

To prove this theorem, the following effective form of the local Kronecker-Weber theorem is needed:
\begin{lemma}\label[lemma]{lem:15272}
	Let $K/\bbQ_p$ be an abelian extension of degree $n$ with conductor $\bff_K$ and let $m=\left(p^{n}-1\right)p^{\bff_K}$. Then $K\subseteq \bbQ_p(\zeta_m)$.
\end{lemma}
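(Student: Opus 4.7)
The plan is to deduce this effective refinement of local Kronecker--Weber directly from local class field theory. Under the reciprocity isomorphism, finite abelian extensions $L/\bbQ_p$ correspond bijectively to open finite-index subgroups $U_L\leq\bbQ_p^{\times}$ via $L\leftrightarrow U_L := N_{L/\bbQ_p}(L^\times)$; this correspondence reverses inclusion, matches $[L:\bbQ_p]$ with $[\bbQ_p^\times:U_L]$, and characterises the conductor exponent $\bff_L$ as the smallest $c\geq 0$ with $U^{(c)}\subseteq U_L$, using the standard convention $U^{(0)}=\bbZ_p^\times$ and $U^{(c)}=1+p^c\bbZ_p$ for $c\geq 1$.

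The first step is to compute the norm subgroup attached to $\bbQ_p(\zeta_m)$. Since $\bbQ_p(\zeta_{p^n-1})$ is the unramified extension of degree $n$, it corresponds to $p^{n\bbZ}\cdot\bbZ_p^\times$; and $\bbQ_p(\zeta_{p^{\bff_K}})$ is totally ramified of degree $\phi(p^{\bff_K})$ with norm subgroup $p^{\bbZ}\cdot U^{(\bff_K)}$. These two extensions intersect only in $\bbQ_p$ (one is unramified, the other totally ramified of degree $>1$), so their compositum $\bbQ_p(\zeta_m)$ corresponds to the intersection of the two subgroups, namely
$$V\;=\;p^{n\bbZ}\cdot\bbZ_p^\times\;\cap\;p^{\bbZ}\cdot U^{(\bff_K)}\;=\;p^{n\bbZ}\cdot U^{(\bff_K)}.$$

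Next, write $U:=U_K$. By the degree hypothesis $[\bbQ_p^\times:U]=n$, and by the definition of the conductor $U^{(\bff_K)}\subseteq U$. Inclusion-reversal in the reciprocity correspondence reduces the desired inclusion $K\subseteq\bbQ_p(\zeta_m)$ to showing $V\subseteq U$, and since $U$ already contains $U^{(\bff_K)}$, the only remaining assertion is $p^n\in U$. This follows immediately from Lagrange's theorem applied to the finite group $\bbQ_p^\times/U$ of order $n$: $x^n\in U$ for every $x\in\bbQ_p^\times$, so in particular $p^n\in U$, completing the chain $V=p^{n\bbZ}\cdot U^{(\bff_K)}\subseteq U$.

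No genuine obstacle is foreseen: once the three ingredients are lined up---the conductor controls the unit direction of $U$, Lagrange controls the $p$-direction of $U$, and the cyclotomic field $\bbQ_p(\zeta_m)$ is recognised as the compositum pinning down the product subgroup $V$---the argument collapses to a one-line group-theoretic check. The only edge case is $\bff_K=0$, where $K$ is already unramified of degree $n$ and $K=\bbQ_p(\zeta_{p^n-1})=\bbQ_p(\zeta_m)$ tautologically.
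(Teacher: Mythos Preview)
Your argument is correct and, in fact, more streamlined than the paper's own proof. Both proofs ultimately rest on local class field theory, but the paper reaches the conclusion indirectly: it first invokes \cite[Lemma 4.11]{guillotGentleCourseLocal2018} to obtain some unspecified $s$ with $\langle p^s\rangle\times U^{(\bff_K)}\subseteq N_{K/\bbQ_p}K^\times$, which only yields $K\subseteq\bbQ_p(\zeta_{(p^s-1)p^{\bff_K}})$; it then imports a separate result of K\"onigsmann \cite[Theorem 3.1]{koenigsmann2022elementary} giving $K\subseteq\bbQ_p(\zeta_{(p^n-1)p^{v_p(n)+2}})$, and finally intersects the two cyclotomic fields to land in $\bbQ_p(\zeta_m)$. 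Your approach bypasses both citations by observing directly that Lagrange's theorem in $\bbQ_p^\times/U_K$ forces $p^n\in U_K$, which together with the conductor condition $U^{(\bff_K)}\subseteq U_K$ immediately gives the required containment of norm groups $p^{n\bbZ}\cdot U^{(\bff_K)}\subseteq U_K$. The gain is a self-contained one-step argument that avoids the auxiliary integer $s$ and the external theorem entirely; the paper's route, by contrast, trades directness for the ability to cite off-the-shelf statements.
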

\begin{proof}
	By \cite[Lemma 4.11]{guillotGentleCourseLocal2018} and its proof, there exists $s\geq 1$ that
	\[\langle p^s\rangle\times U_{\bbQ_p}^{(\bff_K)}\subseteq \calN_{K/\bbQ_p}K^\times.\] It follows that $K\subseteq\bbQ_p\left(\zeta_{(p^s-1)p^{\bff_K}}\right)$ by the proof of \cite[Theorem 13.27]{guillotGentleCourseLocal2018}. On the other hand, we have $K\subseteq\bbQ_p\left(\zeta_{(p^n-1)p^{v_p(n)+2}}\right)$ by  \cite[Theorem 3.1]{koenigsmann2022elementary}. Since
	$$\bbQ_p\left(\zeta_{(p^s-1)p^{\bff_K}}\right)\cap\bbQ_p\left(\zeta_{(p^n-1)p^{v_p(n)+2}}\right)\subseteq \bbQ_p(\zeta_m),$$
	we have $K\subseteq \bbQ_p(\zeta_m)$.
\end{proof}
\begin{proof}[Proof of \Cref{thm:53765}]
	Let $m=\left(p^{n}-1\right)p^{\bff_{\bbQ_p(\alpha)}}$.
	By \Cref{lem:15272}, we know that $\alpha\in\bbQ_p(\zeta_m)$.

	Note $\opn{ord}_{p^{n}-1}p=n$. By \Cref{cor:6370}, we know that
	$$\frakT_{\zeta_m}=\begin{cases}
			1,   & \text{ if }\bff_{\bbQ_p(\alpha)}=0;     \\
			p-1, & \text{ if }\bff_{\bbQ_p(\alpha)}\geq 1,
		\end{cases}$$
	and
	$$\frakF_{\zeta_m}\begin{cases}\begin{alignedat}{3}
				 & \omit\hfill $=$ \hfill &  & n,                                                           &  & \text{ if }\bff_{\bbQ_p(\alpha)}=0;     \\
				 & \omit\hfill $=$ \hfill &  & \opn{lcm}(2,n),                                              &  & \text{ if }\bff_{\bbQ_p(\alpha)}=1,2;   \\
				 & \text{divides }        &  & \opn{lcm}\left(2\cdot p^{ \bff_{\bbQ_p(\alpha)}-1},n\right), &  & \text{ if }\bff_{\bbQ_p(\alpha)}\geq 3.
			\end{alignedat}
		\end{cases}$$
	Since $\alpha\in\bbQ_p(\zeta_m)\subseteq \Lpha(\frakT_{\zeta_m},\frakF_{\zeta_m})$, the result follows.
\end{proof}

\subsection{Criterion for tamely ramified extensions}

\begin{theorem}\leavevmode\label{thm:54918new}Let $\alpha\in\Lpha$ be a hyper-algebraic element in $\bbL_p$. Then $\bbQ_p(\alpha)$ is tamely ramified over $\bbQ_p$ if and only if $\Supp(\alpha)\subseteq\frac{1}{\frakT_\alpha}\bbZ$. In this situation, we have $\frakT_\alpha=\frake_\alpha$, $\frakf_\alpha\mid \frakF_\alpha$ and $\frakF_\alpha \mid c$, where $c\coloneqq \opn{ord}_{\opn{lcm}(\frake_\alpha,p^{\frakf_\alpha}-1)}p$ and $\frakf_\alpha$ (resp. $\frake_\alpha$) is the inertia degree (resp. the ramification index) of the extension $\bbQ_p(\alpha)/\bbQ_p$.
\end{theorem}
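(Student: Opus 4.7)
The plan is to establish both directions of the equivalence and the invariant identifications in one arc, with \Cref{lem:57300new} — the embedding of a tame extension into a radical extension over an unramified base — as the central tool. For the ``if'' direction assume $\Supp(\alpha)\subseteq \frac{1}{\frakT_\alpha}\bbZ$. The first step is to verify that the subset $\calL\subseteq\bbL_p$ of elements whose support lies in $\frac{1}{\frakT_\alpha}\bbZ$ and whose Teichm\"uller coefficients lie in $\bbF_{p^{\frakF_\alpha}}$ is a $\bbQ_p$-subfield: closure under sums and products is immediate from the Mal'cev-Neumann operations, and closure under inverses follows from the geometric-series expansion already used in the proof of \Cref{lem:36325}. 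Since $\alpha\in\calL$, we have $\bbQ_p(\alpha)\subseteq\calL$, so the value group $\frac{1}{\frake_\alpha}\bbZ$ of $\bbQ_p(\alpha)$ is contained in $\frac{1}{\frakT_\alpha}\bbZ$; combined with $\gcd(\frakT_\alpha,p)=1$ from \Cref{lem:40178}, this forces $\frake_\alpha\mid\frakT_\alpha$ and $p\nmid\frake_\alpha$, so $\bbQ_p(\alpha)/\bbQ_p$ is tamely ramified.

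For the converse together with the invariant identifications, assume $\bbQ_p(\alpha)/\bbQ_p$ is tame. The key input is \Cref{lem:57300new}, which produces an embedding
\[
\bbQ_p(\alpha)\subseteq\bbQ_{p^c}(p^{1/\frake_\alpha}),\qquad c=\opn{ord}_{\opn{lcm}(\frake_\alpha,p^{\frakf_\alpha}-1)}p.
\]
Realising $p^{1/\frake_\alpha}\in\bbL_p$ as the $\frake_\alpha$-th root with canonical expansion $[1]\cdot p^{1/\frake_\alpha}$, every element of $\bbQ_{p^c}(p^{1/\frake_\alpha})$ is (by Eisenstein) a $\bbQ_{p^c}$-linear combination of the powers $(p^{1/\frake_\alpha})^i$ for $0\leq i<\frake_\alpha$, so its support lies in $\frac{1}{\frake_\alpha}\bbZ$ and its Teichm\"uller coefficients lie in $\bbF_{p^c}$. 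Applied to $\alpha$ this yields $\Supp(\alpha)\subseteq\frac{1}{\frake_\alpha}\bbZ$ together with $\frakT_\alpha\mid\frake_\alpha$ and $\frakF_\alpha\mid c$. Intersecting the bound $\Supp(\alpha)\subseteq\frac{1}{\frake_\alpha}\bbZ$ with the defining property $\Supp(\alpha)\subseteq\frac{1}{\frakT_\alpha}\bbZ[1/p]$ and exploiting $\gcd(\frake_\alpha,p)=\gcd(\frakT_\alpha,p)=1$, a direct denominator calculation gives $\Supp(\alpha)\subseteq\frac{1}{\frakT_\alpha}\bbZ$; this is the forward half of the equivalence. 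Feeding this sharper support bound into the ``if'' closure argument gives $\frake_\alpha\mid\frakT_\alpha$, hence $\frakT_\alpha=\frake_\alpha$, and the inclusion $\bbQ_p(\alpha)\subseteq\calL$ forces the residue field $\bbF_{p^{\frakf_\alpha}}$ of $\bbQ_p(\alpha)$ to sit inside the residue field $\bbF_{p^{\frakF_\alpha}}$ of $\calL$, giving $\frakf_\alpha\mid\frakF_\alpha$.

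The main obstacle is the proof of \Cref{lem:57300new} itself: one must write $\bbQ_p(\alpha)=\bbQ_{p^{\frakf_\alpha}}(\pi)$ for a tame uniformiser with $\pi^{\frake_\alpha}=u\cdot p$, reduce $u$ to a Teichm\"uller representative $[\bar u]$ via Hensel's lemma, and extract an $\frake_\alpha$-th root of $[\bar u]$ inside the minimal unramified extension that simultaneously contains $\bbF_{p^{\frakf_\alpha}}$ and the group $\mu_{\frake_\alpha}$ of $\frake_\alpha$-th roots of unity — which is exactly $\bbF_{p^c}$ with $c=\opn{lcm}(\frakf_\alpha,\opn{ord}_{\frake_\alpha}p)$. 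Once this radical-type embedding is in hand, the remainder of the proof is the support/coefficient/value-group bookkeeping outlined above.
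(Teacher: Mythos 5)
Your proof is correct and takes essentially the same approach as the paper: the \enquote{if} direction is established by placing $\alpha$ inside the tame extension $\bbQ_{p^{\frakF_\alpha}}(p^{1/\frakT_\alpha})$ (you verify directly that this set, your $\calL$, is a $\bbQ_p$-subfield, whereas the paper identifies it immediately as that field), and the converse together with the invariant identifications hinges on \Cref{lem:57300new} exactly as in the paper, with your reformulation $\bbQ_p(\alpha)\subseteq\bbQ_{p^c}(p^{1/\frake_\alpha})$ just absorbing the paper's root of unity $\zeta_e$ into the unramified base. The remaining support intersection argument, the deduction $\frakT_\alpha\mid\frake_\alpha$ from \Cref{lem:40178}, and the reverse divisibilities $\frake_\alpha\mid\frakT_\alpha$, $\frakf_\alpha\mid\frakF_\alpha$, $\frakF_\alpha\mid c$ are the same as in the paper.
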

The proof of this theorem is based on the subsequent lemma, which is generally standard in local field theory (cf. \cite[Proposition II.5.12]{langAlgebraicNumberTheory1994}), albeit presented here with slightly greater explicitness.
\begin{lemma}\label[lemma]{lem:57300new}
	Let $\alpha\in\overline{\bbQ}_p$ be a $p$-adic algebraic number with $\bbQ_p(\alpha)$ tamely ramified over $\bbQ_p$. Then there exists a $\frake_\alpha$-th root $\zeta_e\in\overline{\bbF}_p$ of unity that
	$$\bbQ_p(\alpha)=\bbQ_{p^{\frakf_\alpha}}\left(p^{1/\frake_\alpha}\cdot[\zeta_e]\right),$$
	where $\bbQ_{p^{\frakf_\alpha}}\coloneqq W(\bbF_{p^{\frakf_\alpha}})\left[\frac{1}{p}\right]$ is the maximal unramified extension of $\bbQ_p$ in $\bbQ_p(\alpha)$.
\end{lemma}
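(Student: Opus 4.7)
The plan is to reduce the statement to the classical structure theorem for totally tamely ramified extensions of local fields and then massage a suitable uniformizer of $\bbQ_p(\alpha)$ into the prescribed form $p^{1/\frake_\alpha}\cdot[\zeta_e]$ by working inside $\bbC_p$. Set $L\coloneqq\bbQ_{p^{\frakf_\alpha}}$ and $K\coloneqq\bbQ_p(\alpha)$, so that $K/L$ is totally tamely ramified of degree $e\coloneqq\frake_\alpha$, coprime to $p$. By the classical structure theorem for totally tamely ramified extensions (e.g.\ Neukirch, Proposition~II.7.7), there exists a uniformizer $\pi\in\calO_K$ with $\pi^e\in L$ a uniformizer of $L$; hence $\pi^e=p\cdot u$ for some $u\in\calO_L^\times$.

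Next, I would renormalize $\pi$ to remove the one-unit part of $u$. Decompose $u=[\bar{u}]\cdot v$ with $\bar{u}\in\bbF_{p^{\frakf_\alpha}}^\times$ the residue class and $v\in 1+p\calO_L$ a one-unit. Since the one-units form a pro-$p$ group and $\gcd(e,p)=1$, raising to the $e$-th power is a bijection on $1+p\calO_L$; let $w\in 1+p\calO_L$ be the unique $e$-th root of $v$. Then $\pi/w\in K$ is still a uniformizer of $K$ (because $w\in L^\times$), and $(\pi/w)^e=p\cdot[\bar{u}]$, so after replacing $\pi$ by $\pi/w$ we may assume $\pi^e=p\cdot[\bar{u}]$.

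To bring $[\bar{u}]$ into Teichm\"{u}ller form, pick any $\zeta_e\in\overline{\bbF}_p$ with $\zeta_e^e=\bar{u}$; since $\bar{u}$ is a root of unity of order dividing $p^{\frakf_\alpha}-1$, such a $\zeta_e$ is automatically a root of unity (its order divides $e\cdot(p^{\frakf_\alpha}-1)$), and its Teichm\"{u}ller lift $[\zeta_e]\in W(\overline{\bbF}_p)\subset\bbC_p$ satisfies $[\zeta_e]^e=[\bar{u}]$. The element $\pi/[\zeta_e]\in\bbC_p$ is then an $e$-th root of $p$; declaring this particular root to be our chosen $p^{1/e}$ yields $\pi=p^{1/e}\cdot[\zeta_e]$ and hence $K=L(\pi)=L(p^{1/e}\cdot[\zeta_e])$, as required.

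The only non-trivial input is the appeal to the structure theorem in the first step; the remainder consists of elementary manipulations in $\calO_L^\times$ and $\bbC_p$. The subtle point to keep track of is that the identity $K=L(p^{1/e}\cdot[\zeta_e])$ holds for a \emph{specific} $e$-th root of $p$ in $\bbC_p$ (namely $\pi/[\zeta_e]$), not for every such root, because when $L$ does not contain all of $\mu_e$ the subfield $L(p^{1/e}\cdot[\zeta_e])\subset\bbC_p$ genuinely depends on this choice; the freedom to match $p^{1/e}$ to $\pi/[\zeta_e]$ inside the ambient $\bbC_p$ is what makes the argument go through.
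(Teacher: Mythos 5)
Your argument does not prove the lemma as stated, and I think the obstruction is genuine rather than cosmetic. Two things go wrong. First, the $\zeta_e$ you construct is a solution of $\zeta_e^{\frake_\alpha}=\bar u$ with $\bar u\in\bbF_{p^{\frakf_\alpha}}^\times$, so in general it is a root of unity of order dividing $\frake_\alpha\left(p^{\frakf_\alpha}-1\right)$, not an $\frake_\alpha$-th root of unity as the lemma requires; you notice this but do not reconcile it with the statement. Second, inside $\bbL_p$ the symbol $p^{1/\frake_\alpha}$ is not a free parameter: it denotes the specific Hahn series with support $\{1/\frake_\alpha\}$ and coefficient $1$, and the identity $\left(p^{1/\frake_\alpha}[\zeta_e]\right)^k=[\zeta_e^k]\,p^{k/\frake_\alpha}$ used right after the lemma in the proof of \Cref{thm:54918new} only holds for that canonical element. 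Declaring $p^{1/\frake_\alpha}\coloneqq\pi/[\zeta_e]$ quietly changes what is being proved.

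These are symptoms of the fact that the lemma, read as stated, is actually false, and the paper's own proof has the matching gap. Take $p=7$, $\alpha=14^{1/3}$. Then $\bbQ_7(14^{1/3})/\bbQ_7$ is totally tamely ramified of degree $3$ ($T^3-14$ is Eisenstein), so $\frake_\alpha=3$, $\frakf_\alpha=1$, $\bbQ_{p^{\frakf_\alpha}}=\bbQ_7$. Since $3\mid 7-1$, every $\zeta_e\in\mu_3\left(\overline{\bbF}_7\right)$ lies in $\bbF_7$, hence $[\zeta_e]\in\bbZ_7$ and $\bbQ_7\left(p^{1/3}[\zeta_e]\right)=\bbQ_7\left(7^{1/3}\right)$ for every admissible $\zeta_e$. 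But $\bbQ_7\left(14^{1/3}\right)\neq\bbQ_7\left(7^{1/3}\right)$: equality would put a cube root of $2$ into a field with residue field $\bbF_7$, whereas any cube root of $2$ reduces to a primitive $9$th root of unity, which does not lie in $\bbF_7$. The paper's proof asserts that $T^{\frake_\alpha}-\bar u$ has a root in the residue field of $\calO_K$ and invokes Hensel; in this example $T^3-2$ has no root in $\bbF_7$ for any choice of uniformizer, so that step fails. Your route (structure theorem for tame extensions, then $u=[\bar u]\cdot v$ with the one-unit absorbed by an $\frake_\alpha$-th root) is the right one, and it does establish the corrected statement that $\bbQ_p(\alpha)=\bbQ_{p^{\frakf_\alpha}}\left(p^{1/\frake_\alpha}\cdot[\zeta]\right)$ for some root of unity $\zeta$ of order dividing $\frake_\alpha\left(p^{\frakf_\alpha}-1\right)$, with $p^{1/\frake_\alpha}$ canonical in $\bbL_p$ (push the ambiguity in your ad hoc choice of $p^{1/\frake_\alpha}$ into $\zeta$ via $\mu_{\frake_\alpha}$). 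That corrected lemma is what \Cref{thm:54918new} actually needs, but the constant $c$ there must then be enlarged from $\opn{ord}_{\opn{lcm}(\frake_\alpha,p^{\frakf_\alpha}-1)}p$ to $\opn{ord}_{\frake_\alpha(p^{\frakf_\alpha}-1)}p$: for $\alpha=14^{1/3}$ one computes $\frakF_\alpha=3$ while the stated $c=\opn{ord}_6 7=1$.
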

\begin{proof}
	Let $\calO_K$ be the ring of integer of $K\coloneqq\bbQ_p(\alpha)$ with a uniformizer $\pi_K$. Suppose $\pi_K^{\frake_\alpha}=p\cdot u$, where $u$ is a unit in $\calO_K^\times$.

	Note that the polynomial $T^{\frake_\alpha}-\overline{u}\in\bbF_{p^{\frakf_\alpha}}[T]$ has simple roots by the condition $\gcd(\frake_\alpha,p)=1$. Hensel lemma implies that there is a $\frake_\alpha$-th root $v$ of $u$ in $\calO_K^\times$. If we set $\pi_K^\prime\coloneqq \pi_K\cdot v^{-1}$, then this element is also a uniformizer of $K$. Since $\pi_K^\prime$ is a $\frake_\alpha$-th root of $p$, we have $\pi_K^\prime=p^{1/\frake_\alpha}\cdot[\zeta_e]$ for some $\frake_\alpha$-th root $\zeta_e$ of unity in $\overline{\bbF}_p$.%

\end{proof}

\begin{proof}[Proof of \Cref{thm:54918new}]
	If $\opn{supp}(\alpha)\subseteq \frac{1}{\frakT_\alpha}\bbZ$, we can write
	$\alpha=\sum_{k>\!\!\!>-\infty}^{+\infty}[r_k]\cdot p^{\frac{k}{\frakT_\alpha}}$,
	where $r_k\in\bbF_{p^{\frakF_\alpha}}$ for all $k$.
	Thus, $\alpha$ lies in $\bbQ_{p^{\frakF_\alpha}}\left(p^{\frac{1}{\frakT_\alpha}}\right)$, where $\bbQ_{p^{\frakF_\alpha}}\coloneqq W(\bbF_{p^{\frakF_\alpha}})\left[\frac{1}{p}\right]$ is the unique unramified extension of $\bbQ_p$ with residue field $\bbF_{p^{\frakF_\alpha}}$. Since $\frakT_\alpha$ is coprime to $p$ (cf. \Cref{lem:40178}), the field $\bbQ_{p^{\frakF_\alpha}}\left(p^{\frac{1}{\frakT_\alpha}}\right)$ is tamely ramified over $\bbQ_p$, implying that $\bbQ_p(\alpha)$ is also tamely ramified over $\bbQ_p$.

	Conversely, if $\bbQ_p(\alpha)/\bbQ_p$ is tamely ramified, then we have $$\bbQ_p(\alpha)=\bbQ_{p^{\frakf_\alpha}}\left(p^{1/\frake_\alpha}\cdot[\zeta_e]\right)$$
	for some $\frake_\alpha$-th root $\zeta_e\in\overline{\bbF}_p$ of unity by \Cref{lem:57300new}. Let  $$\alpha=\sum_{k=0}^{\frake_\alpha-1}c_k\cdot\left(p^{1/\frake_\alpha}\cdot[\zeta_e]\right)^k$$
	with $c_k\in \bbQ_{p^{\frakf_\alpha}}$ for $k=0,\cdots,\frake_\alpha-1$. If we set $c_k=\sum_{i>-\infty}\left[c_i^{(k)}\right]p^i\in\bbQ_{p^{\frakf_\alpha}}$ with $c_i^{(k)}\in\bbF_{p^{\frakf_\alpha}}$,
	then
	\begin{equation}\label{eq:56384}
		\alpha=\sum_{k=0}^{\frake_\alpha-1}\sum_{i>-\infty}\left[c_i^{(k)}\cdot\zeta_e^k\right] p^{i+k/\frake_\alpha}.
	\end{equation}
	This shows that
	$\Supp(\alpha)\subseteq \frac{1}{\frake_\alpha}\bbZ$. Thus, $$\Supp(\alpha)\subseteq \frac{1}{\frake_\alpha}\bbZ\cap \frac{1}{\frakT_\alpha}\bbZ[1/p]\subseteq \bbZ_{(p)}\cap\frac{1}{\frakT_\alpha}\bbZ[1/p]=\frac{1}{\frakT_\alpha}\bbZ.$$

	To prove the second assertion, notice that the inclusion $\alpha\in\bbQ_{p^{\frakF_\alpha}}\left(p^{\frac{1}{\frakT_\alpha}}\right)$ implies $\frake_\alpha\mid \frakT_\alpha$ and $\frakf_\alpha\mid\frakF_\alpha$. On the other hand, if any coefficient $c_i^{(k)}\cdot \zeta_e^k$ in \eqref{eq:56384} is non-zero, then it is a $\opn{lcm}(\frake_\alpha,p^{\frakf_\alpha}-1)$-th root of unity, i.e. $c_i^{(k)}\cdot \zeta_e^k\in\bbF_{p^c}$. As a result, one conclude by \Cref{lem:40178} that $\alpha\in\Lpha(\frake_\alpha,c)$.
\end{proof}
Compared to \Cref{prop:47781}, the constant $c$ in \Cref{thm:54918new} does provide a better bound for the hyper-inertia index in the tamely ramified case:%
\begin{lemma}
	Let $c\coloneqq \opn{ord}_{\opn{lcm}(\frake_\alpha,p^{\frakf_\alpha}-1)}p$ be the constant in \Cref{thm:54918new}. Then $c$ divides $\opn{lcm}(\phi(\frake_\alpha),\frakf_\alpha)$, where $\phi$ is Euler's totient function.
\end{lemma}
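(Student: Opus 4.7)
The plan is to decompose $c$ as a least common multiple over the two factors defining $L \coloneqq \opn{lcm}(\frake_\alpha, p^{\frakf_\alpha}-1)$ and then bound each piece separately.

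First I would observe that since $\bbQ_p(\alpha)/\bbQ_p$ is tamely ramified we have $\gcd(p,\frake_\alpha)=1$, and trivially $\gcd(p,p^{\frakf_\alpha}-1)=1$, so $p$ is coprime to $L$. A standard fact from elementary number theory then gives
\[
\opn{ord}_L p \;=\; \opn{lcm}\left(\opn{ord}_{\frake_\alpha} p,\; \opn{ord}_{p^{\frakf_\alpha}-1} p\right),
\]
because $p^k \equiv 1 \pmod{L}$ holds if and only if $p^k \equiv 1$ modulo each of the two factors. (I would spell this out using that $\opn{lcm}(a,b)$ divides $m$ exactly when both $a$ and $b$ divide $m$.)

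Next I would bound each of the two orders. For the second, by the very definition of $\frakf_\alpha$ one has $p^{\frakf_\alpha} \equiv 1 \pmod{p^{\frakf_\alpha}-1}$ and no smaller positive exponent works, so $\opn{ord}_{p^{\frakf_\alpha}-1}p = \frakf_\alpha$. For the first, Lagrange's theorem applied to the cyclic subgroup generated by $p$ inside $(\bbZ/\frake_\alpha\bbZ)^\times$ gives $\opn{ord}_{\frake_\alpha}p \mid \phi(\frake_\alpha)$.

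Substituting these back yields
\[
c \;=\; \opn{lcm}\left(\opn{ord}_{\frake_\alpha} p,\; \frakf_\alpha\right) \;\Bigm|\; \opn{lcm}(\phi(\frake_\alpha),\frakf_\alpha),
\]
which is exactly the claim. There is no substantive obstacle here; the statement is essentially a routine consequence of the behavior of multiplicative orders under least common multiples combined with Euler's theorem, and the only care needed is to ensure that $p$ is coprime to each modulus, which is immediate in the tame setting.
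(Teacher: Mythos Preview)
Your argument is correct and follows essentially the same route as the paper: decompose $c$ as an $\opn{lcm}$ of two multiplicative orders, identify one of them as $\frakf_\alpha$, and bound the other via Euler's theorem. Your decomposition is in fact slightly more direct than the paper's---you split along the factors $\frake_\alpha$ and $p^{\frakf_\alpha}-1$ of the $\opn{lcm}$ itself, whereas the paper first rewrites $\opn{lcm}(\frake_\alpha,p^{\frakf_\alpha}-1)=e_0\cdot(p^{\frakf_\alpha}-1)$ with $e_0=\frake_\alpha/\gcd(\frake_\alpha,p^{\frakf_\alpha}-1)$ and invokes the Chinese remainder theorem before reducing $\opn{ord}_{e_0}p$ back to $\opn{ord}_{\frake_\alpha}p$; your version avoids that detour.
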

\begin{proof}
	Let $e_0\coloneqq\frac{\frake_\alpha}{\gcd(\frake_\alpha,p^{\frakf_\alpha}-1)}$, then $\opn{lcm}(\frake_\alpha,p^{\frakf_\alpha}-1)=e_0\cdot (p^{\frakf_\alpha}-1)$, with $e_0$ a factor of $\frake_\alpha$ that coprime to $p^{\frakf_\alpha}-1$ and $p$. Chinese remainder theorem implies that
	$$c=\opn{lcm}(\opn{ord}_{e_0}p,\opn{ord}_{p^{\frakf_\alpha}-1}p)=\opn{lcm}(\opn{ord}_{e_0}p,\frakf_\alpha).$$
	Since $e_0$ is a factor of $\frake_\alpha$, we have $\opn{ord}_{e_0}p$ divides $\opn{ord}_{\frake_\alpha}p$. The result follows from Euler's theorem.
\end{proof}
\printbibliography

\end{document}